\theoremstyle{plain}
\newtheorem{theorem}{Theorem}[section]
\newtheorem{lemma}[theorem]{Lemma}
\newtheorem{proposition}[theorem]{Proposition}
\newtheorem{corollary}[theorem]{Corollary}
\numberwithin{equation}{section}
\theoremstyle{definition}
\newtheorem{definition}[theorem]{Definition}
\newtheorem{remark}[theorem]{Remark}
\newtheorem{notation}[theorem]{Notation}
\DeclareMathOperator{\Mod}{-Mod}
\DeclareMathOperator{\module}{-mod}
\DeclareMathOperator{\fdmod}{-fdmod}
\DeclareMathOperator{\Hom}{Hom}
\def\mc{\mathcal{C}} 
\def\mcc{\mc^m} 
\def\mcct{\mcc_{\leq \mathbf{t}}}
\def\kmcc{k\mcc}
\def\obj{\operatorname{obj}}
\def\ce{:=}
\def\im{\operatorname{im}}
\def\to{\longrightarrow}
\def\Ext{\operatorname{Ext}}
\def\coind{Q}
\def\kMod{k\text{-Mod}}
\newcommand{\bff}{{\mathbf{f}}}
\newcommand{\FI}{{\mathrm{FI}}}
\newcommand{\FIM}{{\mathrm{FI}^m}}
\newcommand*{\Mor}[1]{\operatorname{Mor}(#1)}
\newcommand*{\cce}[1]{M(#1)}
\newcommand*{\arr}[1]{\mathbf{#1}} 
\newcommand*{\numset}[1]{[#1]}
\newcommand*{\set}[1]{\{#1\}}
\newcommand*{\homo}[3]{\operatorname{Hom}_{#1}(#2,#3)}
\title{Locally self-injective property of $\FIM$}
\author{Duo Zeng}
\address{LCSM (Ministry of Education), School of Mathematics and Statistics, Hunan Normal University, Changsha, Hunan 410081, China.}
\email{zengduo@hunnu.edu.cn}
\thanks{The author is partially supported by the National Natural Science Foundation of China (Grant No. 11771135) of his advisor Liping Li. He would like to thank Prof. Li who led the author into this area. Without his invaluable suggestions and help this paper would not have appeared.}
\begin{document}

\begin{abstract}
In this paper we consider the locally self-injective property of the product $\FIM$ of the category $\FI$ of finite sets and injections. Explicitly, we prove that the external tensor product commutes with the coinduction functor, and hence preserves injective modules. As corollaries, every projective $\FIM$-modules over a field of characteristic 0 is injective, and the Serre quotient of the category of finitely generated $\FIM$-modules by the category of finitely generated torsion $\FIM$-modules is equivalent to the category of finite dimensional $\FIM$-modules.
\end{abstract}

\maketitle

\textbf{2020 mathematics subject classification:} 16D50

\textbf{Keywords:} self-injective property, Serre quotient, FI$^m$-modules

\section{Introduction}

\subsection{Motivation}
Representation theory of infinite combinatorial categories has attracted much attention with its rich applications in studying (co)homological groups of topological spaces, geometric groups, and algebraic varieties. One of the most important infinite combinatorial categories is the category $\FI$ of finite sets and injections, whose representation theoretic and homological properties are extensively studied as they have been found to have close relations to representation stability of (co)homological groups of many algebraic or topological structures; see for instance \cite{FI_moduleOverNoetherianRing}. In particular, Sam and Snowden firstly proved that every finitely generated projective $\FI$-module over a field of characteristic 0 is also injective. Gan and Li introduce the coinduction functor for $\FI$-modules in \cite{CoinductionOfFI} and give another proof of this fact. Furthermore, they give a homological explanation for the following crucial result established by Church, Ellenberg and Farb in \cite{FI_modulesAndStability}: an $\FI$-module over a field of characteristic 0 is finitely generated if and only if the corresponding sequence of representations of symmetric groups is representation stable. In \cite{AppOfNakayama}, Gan, Li and Xi introduce the Nakayama functor and prove that the Serre quotient of the category of finitely generated modules of a family of infinite combinatorial categories (including the category $\FI$ and the category $\mathrm{VI}$ of finite dimensional vector spaces over a finite field and linear injections) over a field of characteristic 0 by the category of finitely generated torsion modules is equivalent to the category of finite dimensional modules.

The product category of several copies of $\FI$ has also been studied; see for instance \cite{CategoriesOfFI_type, FIm_Module}. It is found that many properties such as the local Noetherianity, representation stability, and polynomial growth of Hilbert functions can be extended from $\FI$ to its product categories. Thus we wonder whether the product category $\FIM$ is also locally self-injective over fields of characteristic 0, and whether the above equivalence of categories still holds. The main goal of this paper is to answer these questions.

\subsection{Notations}

Before describing the main results, let us introduce some notations. Throughout this paper let $m$ be a positive integer, and let $\FI^m$ be the category whose objects are $m$-tuples of finite sets $\mathbf{S} = (S_1, \, \ldots, \, S_m)$, and morphisms from an object $\mathbf{S}$ to another object $\mathbf{S'}$ are maps $\bff = (f_1, \, \ldots, \, f_m)$ such that each $f_i: S_i \to S_i'$ is an injection. When $m = 1$, $\FI^m$ coincides with $\FI$. For brevity, We denote by $\mc$ for $\FI$ and $\mcc$ for $\FI^m$.

Let $k$ be a field of characteristic 0. A \textit{representation} of $\mcc$, or a $\mcc$-\emph{module}, is a covariant functor $V$ from $\mcc$ to $k \Mod$, the category of vector spaces over $k$. The value of a representation $V$ on an object $\arr{S}$ is denoted by $V(\arr{S})$. We denote by $\mcc \Mod$ the category of all representation of $\mcc$. It is well known that $\mcc \Mod$ is an abelian category, and it has enough projectives. In particular, for an object $\arr{S}$ in $\mcc$, the $k$-linearization of the representable functor $\mcc(\arr{S}, -)$ is a projective $\mcc$-module. We denote it by $M(\arr{S})$, and we say that a $\mcc$-module is a \textit{free module} if it is a direct sum of several $M(\arr{S})$ up to isomorphism.

Sometimes it is more convenient to view a representation of $\mcc$ as a module over the non-unital $k$-algebra $k\mcc$ called the \textit{category algebra} (for a definition, see \cite{StandardStratifications}). Thus, terminologies from module theory, such as finitely generated modules and finitely presented modules, can be applied. We denote by $\mcc \module$ the category of all \textit{finitely generated} $\mcc$-modules and $\mcc \fdmod$ the category of all \textit{essentially finitely dimensional} $\mcc$-modules, where a $\mcc$-module $V$ is essentially finite dimensional if its values on all objects are finite dimensional and up to isomorphism there are only finitely many objects $\arr{S}$ of $\mcc$ such that $V(\arr{S}) \neq 0$. \footnote{If we take a skeleton subcategory $\mathcal{D}$ of $\mcc$, then essentially finite dimensional $\mcc$-modules correspond exactly to finite dimensional $\mathcal{D}$-modules.} They are also abelian categories; see \cite{AppOfNakayama, FIm_Module}.

Let $V$ be a $\mcc$-module, and $\arr{S}$ be an object of $\mcc$. An element $v \in V(\arr{S})$ is \textit{torsion} if there exists a morphism $\alpha: \arr{S} \to \arr{T}$ such that $\alpha \cdot v = 0$. Torsion elements in $V$ form a submodule of $V$, denoted by $V_T$. Note that the assignment $V \to V_T$ is a left exact functor from $\mcc \Mod$ to itself, and the category $\mcc \Mod^{\mathrm{tor}}$ of torsion modules is also abelian.

\subsection{Main results and strategy}

The first main result of this paper establishes the local self-injectivity of $\mcc$ over a field of characteristic 0. That is,

\begin{theorem}
Let $V_i$, $1 \leqslant i \leqslant m$, be finitely generated injective $\mc$-modules over $k$. Then the external tensor product $V_1 \boxtimes V_2 \boxtimes \ldots \boxtimes V_m$ is a finitely generated injective $\mcc$-modules. In particular, every finitely generated projective $\mcc$-modules is also injective.
\end{theorem}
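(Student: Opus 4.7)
The second assertion follows from the first: every finitely generated projective $\mcc$-module is a direct summand of a finite direct sum of representables $\cce{\arr{S}}$, and each $\cce{\arr{S}}$ decomposes as the external tensor product $\cce{S_1} \boxtimes \cdots \boxtimes \cce{S_m}$ of finitely generated projective $\mc$-modules, which are injective over a field of characteristic zero by Sam-Snowden and Gan-Li. Once the first assertion is established, these tensor products will be injective $\mcc$-modules, and direct summands of injectives remain injective.

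For the first assertion, the strategy is to extend the coinduction approach of \cite{CoinductionOfFI} from $\mc$ to $\mcc$. Recall that the $\mc$-level coinduction $\coind : \mc\Mod \to \mc\Mod$ is right adjoint to an exact shift functor, hence preserves injective modules, and is the principal tool used in \cite{CoinductionOfFI} to recognize and produce finitely generated injective $\mc$-modules. I will introduce, for each $1 \leq i \leq m$, a partial coinduction functor $\coind_i : \mcc\Mod \to \mcc\Mod$ that applies $\coind$ only to the $i$-th coordinate; each $\coind_i$ is again right adjoint to a partial shift functor that is exact pointwise, and so preserves injective $\mcc$-modules. The central technical point, and what I expect to be the principal obstacle in the proof, is to establish the commutation identity
$$\coind_i(V_1 \boxtimes \cdots \boxtimes V_m) \;\cong\; V_1 \boxtimes \cdots \boxtimes \coind(V_i) \boxtimes \cdots \boxtimes V_m.$$
I intend to verify this by unfolding the Hom-theoretic definition of $\coind$ from \cite{CoinductionOfFI} and applying a Fubini-type interchange, exploiting that the external tensor product in the remaining coordinates is computed pointwise and therefore commutes with any natural construction acting solely on the $i$-th coordinate.

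Iterating the identity across $i$ then yields
$$\coind^{r_1}(W_1)\boxtimes\cdots\boxtimes\coind^{r_m}(W_m) \;\cong\; \coind_1^{r_1}\cdots\coind_m^{r_m}(W_1\boxtimes\cdots\boxtimes W_m)$$
for any $\mc$-modules $W_i$ and integers $r_i \geq 0$. Combining this with the $\mc$-level description of finitely generated injective modules from \cite{CoinductionOfFI}, which realizes each $V_i$ as a summand of $\coind^{r_i}(W_i)$ for a base module $W_i$ whose external tensor product remains injective in $\mcc\Mod$ by an elementary direct argument, we obtain $V_1\boxtimes\cdots\boxtimes V_m$ as a summand of $\coind_1^{r_1}\cdots\coind_m^{r_m}(W_1\boxtimes\cdots\boxtimes W_m)$, which is injective since each $\coind_i$ preserves injectives. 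Finite generation is inherited from the $V_i$ because $\mcc$ is locally Noetherian and the external tensor product of finitely generated $\mc$-modules is a finitely generated $\mcc$-module.
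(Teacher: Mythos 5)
Your proposal follows essentially the same route as the paper: the paper's central technical result is precisely the commutation identity $\coind_i(V_1 \boxtimes \cdots \boxtimes V_m) \cong V_1 \boxtimes \cdots \boxtimes \coind(V_i) \boxtimes \cdots \boxtimes V_m$ (Theorem \ref{theorem_Coinduction_Preserves_External_Product}, proved by an explicit Hom-theoretic computation), and injectivity is then deduced exactly as you outline, by combining the classification of finitely generated injective $\mc$-modules with the fact that each $\coind_i$, being right adjoint to the exact shift functor $\Sigma_i$, preserves injectives. The base case you defer to ``an elementary direct argument'' is the paper's Lemma \ref{lemma_Injectivity_of_Finitely_Dimensional_injective_Modules_Tensor_M0}, which truncates to finite full subcategories where the category algebra becomes a tensor product of finite dimensional algebras and applies the standard duality.
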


Finitely generated injective $\mc$-modules have been classified in \cite{GL_equivariantModules}. Explicitly, if $V$ is a finitely generated injective $\mc$-module, then $V$ is a direct sum of an essentially finite dimensional injective $\mc$-module and a finitely generated projective $\mc$-module. It is easy to show the conclusion of the above theorem for the case that each $V_i$ is essentially finite dimensional. In the case that a certain $V_i$ is projective, we use the coinduction functor for $\mc$-modules introduced in \cite{CoinductionOfFI} and show that it commutes with external tensor products, and hence deduce the desired result.

We also slightly modify the technique described in \cite{AppOfNakayama} and prove the following result.

\begin{theorem}
One has the following equivalence
\[
\mcc \module / \mcc \module^{\mathrm{tor}} \xrightarrow{\sim} \mcc \fdmod
\]
which is induced by the Nakayama functor.
\end{theorem}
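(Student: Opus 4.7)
The strategy follows the template of Gan--Li--Xi \cite{AppOfNakayama} for $m=1$, combined with the external tensor product machinery developed for Theorem~1.1 to transport the single-factor result to $\FIM$. The goal is to construct an exact Nakayama-type functor $\nu \colon \mcc\module \to \mcc\fdmod$ that annihilates torsion modules, and then verify that the functor it induces on the Serre quotient is an equivalence.

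First, I would define $\nu$ as an $m$-fold external tensor product of the Nakayama functors $\nu_i$ for the $i$-th copy of $\mc$. On modules of the form $V_1 \boxtimes \cdots \boxtimes V_m$ the definition is forced: $\nu(V_1 \boxtimes \cdots \boxtimes V_m) = \nu_1(V_1) \boxtimes \cdots \boxtimes \nu_m(V_m)$. To extend $\nu$ to all of $\mcc\module$, I would exploit a commutation lemma in the same spirit as the one used for Theorem~1.1 (external tensor product commutes with coinduction), now with coinduction replaced by the $\nu_i$. Presenting a finitely generated $\mcc$-module by free modules $M(\arr{S}) \cong M(S_1) \boxtimes \cdots \boxtimes M(S_m)$ should then yield well-definedness, exactness, and that $\nu$ lands in $\mcc\fdmod$.

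Next, I would identify $\ker \nu \cap \mcc\module$ with $\mcc\module^{\mathrm{tor}}$. The inclusion $\supseteq$ is relatively clear: a torsion element is killed by a morphism in at least one coordinate, and $\nu_i$ is designed to detect and eliminate exactly such torsion in the $i$-th factor. The reverse inclusion reduces, via the external tensor product decomposition of free modules together with the commutation lemma above, to the corresponding statement for each $\nu_i$ established in \cite{AppOfNakayama}. The universal property of the Serre quotient then provides an exact functor $\bar\nu \colon \mcc\module/\mcc\module^{\mathrm{tor}} \to \mcc\fdmod$.

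Finally, to show $\bar\nu$ is an equivalence, essential surjectivity is easy since every essentially finite dimensional $\mcc$-module has a composition series whose factors are simples of the form $\nu(M(\arr{S}))$. The main obstacle is fully faithfulness: $\Hom$-spaces in the Serre quotient are computed as colimits over subobjects with torsion cokernel, and identifying these with $\Hom$-spaces in $\mcc\fdmod$ is the heart of the argument. Once again the plan is to decompose via the external tensor product and reduce the verification to the $\FI$ case proved in \cite{AppOfNakayama}, so that the only genuinely new work lies in the coherent commutations listed above.
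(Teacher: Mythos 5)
Your proposal diverges from the paper's argument in a way that leaves genuine gaps. The paper does not build $\nu$ as an external tensor product of single-factor Nakayama functors; it takes the intrinsic definition $\nu = D \circ \homo{\mcc}{-}{\kmcc}$ and then simply verifies the hypotheses of the general equivalence theorem \cite[Theorem 3.6]{AppOfNakayama} for the EI category $\mcc$: locally Noetherian, essentially inwards finite, hom-finite, and --- crucially --- locally self-injective, which is precisely what Theorem \ref{theorem_injectivity_of_external_tensor_of_injective_modules} and Corollary \ref{corollary_finitely_generated_projective_FIm_module_is_injective} were established to supply. Exactness of $\nu$ and fully faithfulness of the induced functor are then handled once and for all by that citation. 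Your outline never invokes the injectivity of finitely generated projectives, and instead asserts that presenting a module by free modules ``yields exactness''; this is false --- such an extension is only right exact, and the exactness of $\homo{\mcc}{-}{\kmcc}$ on $\mcc\module$ is essentially equivalent to the self-injectivity you omit. The external tensor product machinery is the engine of the earlier sections, not of this one.

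The second gap is in the reduction strategy itself. The only genuinely new content needed here is Proposition \ref{proposition_equality_of_Nakayama_kernel_and_torsion_modules}, $\ker(\nu)=\mcc\module^{\mathrm{tor}}$, and its hard inclusion concerns an arbitrary non-torsion finitely generated module $V$: one must produce a nonzero map from $V$ to a finitely generated projective. Such a $V$ --- and likewise the subobjects ranging over the colimit that defines Hom-sets in the Serre quotient --- is not an external tensor product, so the commutation lemma for $\boxtimes$ gives no foothold; decomposing the free modules in a presentation does not decompose $V$. The paper instead embeds the torsion-free quotient $V_F$ into an iterated shift $\Sigma_1^n\cdots\Sigma_m^n V_F$, which is projective in characteristic $0$ by \cite{FIm_Module}; some such shift-functor argument is indispensable and has no analogue in your plan. (A minor further slip: $\nu(M(\arr{S}))$ is an injective object of $\mcc\fdmod$, not a simple one, so the essential-surjectivity sentence also needs repair.)
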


\begin{remark} \normalfont
The Serre quotient category $\mcc \module/ \mcc \module^{\mathrm{tor}}$ is equivalent to the category of ``finitely generated" sheaves of $k$-modules over the opposite category of $\mcc$ equipped with the atomic topology, and is equivalent to the category of ``finitely generated" discrete representations of the topological group $G^m$, where $G = \mathrm{Aut}(\mathbb{N})$ is the group of bijections between $\mathbb{N}$, equipped with the topology inherited from the canonical product topology. For details, see \cite{SheavesOverCat}.
\end{remark}

\section{Preliminaries}

In this section we describe some preliminary results which will be used later.

\subsection{General facts}

Since most results in this subsection are either well known or have been established in literature, we omit proofs.

\begin{lemma}[{\cite[Theorem 1.1]{FIm_Module}}] \label{lemma_locally_noetherian_of_FIm}
The category $\mcc$ is locally Noetherian over a commutative Noetherian coefficient ring; that is, every submodule of a finitely generated $\mcc$-module is finitely generated.
\end{lemma}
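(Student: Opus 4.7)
The plan is to argue by induction on $m$. The base case $m=1$ is the classical local Noetherianity of $\mc = \FI$ over a Noetherian coefficient ring, proved by Church--Ellenberg--Farb--Nagpal and extended by Sam--Snowden and Church--Ellenberg--Nagpal. For the inductive step I would first reduce to showing that each principal projective $M(\arr{S})$ is Noetherian: any finitely generated $\mcc$-module is a quotient of a finite direct sum of such projectives, and the class of Noetherian modules is closed under quotients, extensions, and finite direct sums.

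To establish Noetherianity of $M(\arr{S})$ I would deploy a Gröbner-style argument in the spirit of Sam--Snowden. Fix a linear order on $\mathbb{N}$ and order each hom-set $\homo{\mc}{S}{T}$ lexicographically by the tuple of values of an injection; extend this to $\mcc$ via a lex-product over the $m$ coordinates. The key combinatorial input is that the single-factor order is a well-partial-order on injections of finite sets (a Higman-type well-quasi-ordering), and by Dickson's lemma the $m$-fold product remains a well-partial-order. To any $k\mcc$-submodule $W \subseteq M(\arr{S})$ one attaches the initial submodule spanned by the leading morphisms of elements of $W$; the well-partial-order property forces this initial submodule to be generated by finitely many leading terms, and a division/lifting argument then produces a finite generating set of $W$ itself.

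The main obstacle is twofold. First, one must check that the chosen order is compatible with post-composition in $\mcc$, so that ``leading term'' behaves correctly under the category action; this is classical for $\FI$ and is inherited by the product. Second, one must argue that finitely many generators of the initial submodule lift to finitely many generators of $W$, which in the non-commutative categorical setting replaces the scalar arithmetic of classical Gröbner bases by a descending induction on leading terms using the partial composition structure of $\mcc$. Both issues are exactly those axiomatised by Sam--Snowden's \emph{quasi-Gröbner} framework, so in the end my plan reduces to verifying that $\FI$ is quasi-Gröbner (known) and that the quasi-Gröbner property is stable under cartesian products of categories, which is essentially the lex-product construction above.
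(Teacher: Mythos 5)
The paper offers no proof of this lemma: it is quoted directly from Li--Yu \cite{FIm_Module} (Theorem 1.1 there), and the author explicitly omits proofs throughout this subsection. Your outline is essentially the standard Gr\"obner-methods argument of Sam--Snowden (also carried out for products of ``FI type'' categories by Gadish) that underlies the cited result, and it is correct in structure: reduce to Noetherianity of the principal projectives $M(\arr{S})$, pass to the ordered companion category ($\OI^m$ rather than $\FIM$) which is genuinely Gr\"obner, and invoke stability of the (quasi-)Gr\"obner property under finite products, which rests on the facts that lexicographic products of well-orders are well-orders and that finite products of well-partial-orders are well-partial-orders. Two small points to tighten: the combinatorial input for the product step is the general closure of well-partial-orders under finite products (Dickson's lemma is only the special case $\mathbb{N}^m$; Higman's lemma is what gives the single-factor well-partial-order on injections), and your opening ``induction on $m$'' carries no independent content once you have the two-factor product statement---it is just the trivial iteration. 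You should also make explicit that property (F) for the forgetful functor $\OI^m \to \FIM$ (inherited coordinatewise from $\OI \to \FI$) is what transports Noetherianity from the ordered category back to $\FIM$; with that said, the plan matches the literature proof that the paper is citing.
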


The following lemma is standard (see \cite[page 39]{weibelHomologicalAlgebra}).

\begin{lemma}[Baer’s criterion] \label{lemma_Baer_criterion}
Let $V$ be a $\mcc$-module. Suppose that for all $\arr{S} \in \obj(\mcc)$ and for all $\mcc$-submodules $U$ of $\cce{\arr{S}}$ , every homomorphism $U \to V$ can be extended to a homomorphism $\cce{\arr{S}} \to V$. Then $V$ is injective in $\mcc \Mod$.
\end{lemma}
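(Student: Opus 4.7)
The plan is to mimic the classical proof of Baer's criterion for modules over a ring, replacing "ideals of $R$" by "submodules of the representable projectives $M(\arr{S})$" and using Yoneda to generate cyclic submodules. The core input is Zorn's lemma on the poset of partial extensions.

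First I would reduce injectivity of $V$ to the statement: for every inclusion $B \hookrightarrow A$ of $\mcc$-modules and every homomorphism $\phi : B \to V$, there exists an extension $\tilde\phi : A \to V$. Fixing such data, consider the poset $\mathcal{P}$ consisting of pairs $(B', \phi')$ with $B \subset B' \subset A$ and $\phi' : B' \to V$ extending $\phi$, ordered by extension. Chains in $\mathcal{P}$ have obvious upper bounds (take union of submodules and the compatible union of maps, which is well-defined since $\mcc\Mod$ is a Grothendieck category), so Zorn's lemma yields a maximal element $(B', \phi')$.

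The heart of the argument is to show $B' = A$. Assume otherwise and choose $\arr{S} \in \obj(\mcc)$ together with an element $a \in A(\arr{S}) \setminus B'(\arr{S})$. By the Yoneda lemma, $a$ corresponds to a homomorphism $\pi : M(\arr{S}) \to A$ sending $\mathrm{id}_{\arr{S}}$ to $a$. Set $U := \pi^{-1}(B') \subset M(\arr{S})$; this is a $\mcc$-submodule, and the composition $U \xrightarrow{\pi} B' \xrightarrow{\phi'} V$ is a homomorphism from $U$ to $V$. The hypothesis of the lemma now produces an extension $\psi : M(\arr{S}) \to V$ with $\psi|_U = \phi' \circ \pi|_U$.

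I would then define $\phi'' : B' + \pi(M(\arr{S})) \to V$ by $\phi''(b' + \pi(x)) = \phi'(b') + \psi(x)$. Well-definedness is the one technical point and is the main obstacle to watch, but it is straightforward: if $b'_1 + \pi(x_1) = b'_2 + \pi(x_2)$, then $\pi(x_1 - x_2) = b'_2 - b'_1 \in B'$, so $x_1 - x_2 \in U$, whence $\psi(x_1 - x_2) = \phi'(\pi(x_1 - x_2)) = \phi'(b'_2 - b'_1)$ and rearranging gives $\phi'(b'_1) + \psi(x_1) = \phi'(b'_2) + \psi(x_2)$. Since $a = \pi(\mathrm{id}_{\arr{S}})$ lies in the domain of $\phi''$ but not in $B'$, the pair $(B' + \pi(M(\arr{S})), \phi'')$ strictly dominates $(B', \phi')$ in $\mathcal{P}$, contradicting maximality. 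Hence $B' = A$ and $V$ is injective.
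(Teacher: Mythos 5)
Your proof is correct and is exactly the standard Zorn's-lemma argument for Baer's criterion that the paper defers to (it cites Weibel rather than giving a proof), correctly adapted to the functor-category setting by letting the representables $M(\arr{S})$ play the role of the ring and using Yoneda to produce the map $\pi$ with $\pi(\mathrm{id}_{\arr{S}})=a$. The key adaptation point --- that every element of $A(\arr{S})$ is hit by a map out of some $M(\arr{S})$, so testing against submodules of the $M(\arr{S})$ suffices even though $k\mcc$ is non-unital --- is handled properly, and the well-definedness check for $\phi''$ is the right one.
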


As an immediate corollary, we have:

\begin{corollary} \label{coro_injModmod}
Let $V$ be a finitely generated $\mcc$-module. Then $V$ is injective in $\mcc \Mod$ if and only if it is injective in $\mcc \module$.
\end{corollary}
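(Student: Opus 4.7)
The plan is to verify both implications directly, using Baer's criterion together with local Noetherianity to handle the nontrivial direction.

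For the forward direction, I would just observe that $\mcc\module$ is a full subcategory of $\mcc\Mod$, closed under subobjects (by Lemma \ref{lemma_locally_noetherian_of_FIm}) and quotients. So any short exact sequence $0 \to V \to W \to X \to 0$ in $\mcc\module$ is also short exact in $\mcc\Mod$; if $V$ is injective in $\mcc\Mod$ then the sequence splits, showing $V$ is injective in $\mcc\module$. This uses nothing beyond the definitions.

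For the reverse direction, I would apply Baer's criterion (Lemma \ref{lemma_Baer_criterion}). Suppose $V$ is finitely generated and injective in $\mcc\module$. Let $\arr{S} \in \obj(\mcc)$, let $U \subseteq M(\arr{S})$ be a submodule, and let $\varphi \colon U \to V$ be any homomorphism. The key input is local Noetherianity: since $M(\arr{S})$ is finitely generated, Lemma \ref{lemma_locally_noetherian_of_FIm} tells us that $U$ is also finitely generated. Hence the inclusion $U \hookrightarrow M(\arr{S})$ is a monomorphism in the abelian category $\mcc\module$, and $\varphi$ is a morphism in $\mcc\module$. By injectivity of $V$ in $\mcc\module$, the map $\varphi$ extends to a homomorphism $M(\arr{S}) \to V$. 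Baer's criterion then yields injectivity of $V$ in $\mcc\Mod$.

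There is essentially no obstacle here; the only nontrivial ingredient is that $U$ is finitely generated, which is exactly the content of Lemma \ref{lemma_locally_noetherian_of_FIm}. The argument is the standard reduction via Baer's criterion, adapted from the classical setting of Noetherian rings to the locally Noetherian category $\mcc$.
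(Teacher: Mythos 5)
Your proof is correct and is exactly the argument the paper intends: the paper's own proof simply cites Lemmas \ref{lemma_locally_noetherian_of_FIm} and \ref{lemma_Baer_criterion}, and your write-up fills in precisely those details (Baer's criterion plus the fact that submodules of $M(\arr{S})$ are finitely generated). The only cosmetic remark is that in the forward direction one can argue more directly -- a monomorphism in $\mcc\module$ is a monomorphism in $\mcc\Mod$, so the extension exists by injectivity in $\mcc\Mod$ and lies in the full subcategory -- rather than going through the splitting of short exact sequences.
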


\begin{proof}
This follows immediately from Lemmas \ref{lemma_locally_noetherian_of_FIm} and \ref{lemma_Baer_criterion}.
\end{proof}

The following useful lemma is a well known result from homological algebra.

\begin{lemma}[{\cite[Proposition 2.3.10]{weibelHomologicalAlgebra}}] \label{lemma_right_adjoint_preserves_injectives}
Let $\mathcal{A}$ and $\mathcal{B}$ be two additive categories. If an additive functor $R: \mathcal{B} \to \mathcal{A}$ is right adjoint to an exact functor $L: \mathcal{A} \to \mathcal{B}$ and $I$ is an injective object in $\mathcal{B}$, then $R(I)$ is an injective object in $\mathcal{A}$.
\end{lemma}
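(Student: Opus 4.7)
The plan is to verify injectivity of $R(I)$ in $\mathcal{A}$ directly from the extension property characterizing injective objects, transporting the problem to $\mathcal{B}$ via the adjunction so that the injectivity of $I$ becomes available.

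First, I would fix a monomorphism $\iota\colon A \hookrightarrow A'$ in $\mathcal{A}$ together with an arbitrary morphism $\varphi\colon A \to R(I)$, and seek to produce an extension $\psi\colon A' \to R(I)$ with $\psi \circ \iota = \varphi$. Denote by
\[
\Phi_{X, Y}\colon \Hom_{\mathcal{A}}(X, R(Y)) \xrightarrow{\sim} \Hom_{\mathcal{B}}(L(X), Y)
\]
the natural adjunction isomorphism. Transport $\varphi$ to $\tilde\varphi \ce \Phi_{A, I}(\varphi)\colon L(A) \to I$.

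Second, I would invoke the exactness of $L$: since $L$ preserves monomorphisms, $L(\iota)\colon L(A) \to L(A')$ is a monomorphism in $\mathcal{B}$. The injectivity of $I$ then yields a morphism $\tilde\psi\colon L(A') \to I$ with $\tilde\psi \circ L(\iota) = \tilde\varphi$. Pulling back through the adjunction, set $\psi \ce \Phi_{A', I}^{-1}(\tilde\psi)\colon A' \to R(I)$.

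Finally, naturality of $\Phi$ in the first variable, applied to $\iota\colon A \to A'$, gives the commutative square
\[
\Phi_{A, I}(\psi \circ \iota) = \tilde\psi \circ L(\iota) = \tilde\varphi = \Phi_{A, I}(\varphi),
\]
and since $\Phi_{A, I}$ is a bijection, $\psi \circ \iota = \varphi$, establishing the required extension. The only subtle point — hardly an obstacle — is using the naturality of the adjunction cleanly; the exactness hypothesis on $L$ is used only to guarantee that $L(\iota)$ remains a monomorphism, so that the injectivity of $I$ applies.
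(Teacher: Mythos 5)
Your proof is correct and is precisely the standard argument from the cited reference (Weibel, Proposition 2.3.10): transport the extension problem along the adjunction, use exactness of $L$ only to see that $L(\iota)$ stays monic, apply injectivity of $I$, and pull back via naturality. The paper itself omits the proof and simply cites this result, so there is nothing to compare beyond noting that your argument is the expected one.
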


The following lemma classifies all finitely generated projective $\mcc$-modules.

\begin{lemma} \label{lemma_f.g._projective_module_is_summand_of_free_module}
Any finitely generated projective $\mcc$-module is a direct summand of some free $\mcc$-module.
\end{lemma}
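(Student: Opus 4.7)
The plan is to exhibit any finitely generated projective $\mcc$-module as a quotient of a finitely generated free module, and then use projectivity to split this quotient map.

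First I would recall the Yoneda-type property of the projective generators $\cce{\arr{S}}$: for a $\mcc$-module $V$ and an element $v \in V(\arr{S})$, there is a unique homomorphism $\varphi_v : \cce{\arr{S}} \to V$ whose value at the identity morphism $\mathrm{id}_{\arr{S}} \in \cce{\arr{S}}(\arr{S}) = k\mcc(\arr{S},\arr{S})$ equals $v$. This is immediate from the $k$-linearization of Yoneda's lemma, since $\cce{\arr{S}}$ is the $k$-linearization of the representable functor $\mcc(\arr{S},-)$.

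Next, let $P$ be a finitely generated projective $\mcc$-module. By definition of finite generation, $P$ admits a finite set of generators $v_1, \ldots, v_n$ with $v_i \in P(\arr{S}_i)$ for suitable objects $\arr{S}_1, \ldots, \arr{S}_n$ of $\mcc$. Assembling the maps $\varphi_{v_i}$ from the previous step yields a homomorphism
\[
\pi : F \ce \bigoplus_{i=1}^n \cce{\arr{S}_i} \longrightarrow P,
\]
and $F$ is by construction a free $\mcc$-module. The image of $\pi$ contains every generator $v_i$ (since $\pi(\mathrm{id}_{\arr{S}_i}) = v_i$), and the image is closed under the $\mcc$-action, so the image is a submodule of $P$ containing a generating set; hence $\pi$ is surjective.

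Finally, since $P$ is projective, the identity map $P \to P$ lifts through the surjection $\pi$ to a homomorphism $\sigma : P \to F$ satisfying $\pi \circ \sigma = \mathrm{id}_P$. Thus $\pi$ splits, and $P$ is a direct summand of the free module $F$, as desired. There is no real obstacle here: the argument is the standard one, and the only point requiring a category-theoretic (rather than ring-theoretic) comment is the Yoneda identification used to construct $\pi$ from the finite generating set.
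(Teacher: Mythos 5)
Your proof is correct and follows essentially the same route as the paper's: take a finite generating set $v_i \in P(\arr{S}_i)$, build the surjection $\bigoplus_{i=1}^n \cce{\arr{S}_i} \to P$ via the Yoneda identification, and split it using projectivity. The extra detail you give on why the map is well-defined and surjective is fine but not needed beyond what the paper records.
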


\begin{proof}
Let $P$ be a finitely generated projective $\mcc$-module and $n$ be a positive integer. Suppose that $v_i \in P(\arr{S}_i)$, $i=1,\ldots,n$, form a set of generators of $P$ where $\arr{S}_i \in \obj(\mcc)$. Then there is an obvious surjection
\[
\bigoplus_{i=1}^n M(\arr{S}_i) \to P.
\]
Since $P$ is projective, the above surjection splits. Thus $P$ is a direct summand of $\bigoplus_{i=1}^n M(\arr{S}_i)$ which is a free $\mcc$-module.
\end{proof}

Now we introduce the \textit{external tensor product}, constructing a $\mcc$-module from a family of $\mc$-modules. Given $\mc$-modules $V_1,\cdots,V_m$, we define their external tensor product to be the $\mcc$-module
\[
V_1\boxtimes \cdots \boxtimes V_m: \mcc \to \kMod
\]
such that
\[
(V_1\boxtimes \cdots \boxtimes V_m)(S_1,\cdots,S_m) \ce V_1(S_1) \otimes_k \cdots \otimes_k V_m(S_m)
\]
and
\[
(V_1\boxtimes \cdots \boxtimes V_m)(\alpha_1,\cdots,\alpha_m) \ce V_1(\alpha_1) \otimes_k \cdots \otimes_k V_m(\alpha_m)
\]
where $S_i \in \obj(\mc)$ and $\alpha_i \in \Mor{\mc}$ for $i = 1, \cdots, m$.

\begin{lemma} \label{lemma_external_tensor_preserves_direct_sum}
Let $V_i$, $1 \leqslant i \leqslant m$, and $U$ be $\mc$-modules. Then
\[
V_1 \boxtimes \ldots \boxtimes V_{i-1} \boxtimes (V_i \oplus U) \boxtimes V_{i+1} \boxtimes \ldots \boxtimes V_n \cong (V_1 \boxtimes \ldots \boxtimes V_m) \oplus (V_1 \boxtimes \ldots \boxtimes V_{i-1} \boxtimes U \boxtimes V_{i+1} \boxtimes \ldots \boxtimes V_m)
\]
as $\mcc$-modules.
\end{lemma}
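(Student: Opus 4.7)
The plan is to reduce the statement to the elementary fact that the tensor product of $k$-vector spaces distributes over direct sums, and then observe that because the external tensor product is defined pointwise, both the distributivity isomorphism and its compatibility with morphisms can be checked object by object.

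First I would evaluate both sides on an arbitrary object $\arr{S} = (S_1, \ldots, S_m) \in \obj(\mcc)$. The left-hand side becomes
\[
V_1(S_1) \otimes_k \cdots \otimes_k V_{i-1}(S_{i-1}) \otimes_k \bigl(V_i(S_i) \oplus U(S_i)\bigr) \otimes_k V_{i+1}(S_{i+1}) \otimes_k \cdots \otimes_k V_m(S_m),
\]
while the right-hand side becomes the direct sum of the analogous tensor products with $V_i(S_i)$ and $U(S_i)$ in the $i$-th slot respectively. Applying the canonical distributivity isomorphism of $\otimes_k$ over $\oplus$ in $\kMod$ yields a natural-in-$\arr{S}$ isomorphism $\varphi_{\arr{S}}$ between the two sides.

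Next I would verify that the family $\{\varphi_{\arr{S}}\}_{\arr{S} \in \obj(\mcc)}$ is natural, i.e., that for any morphism $(\alpha_1, \ldots, \alpha_m) : \arr{S} \to \arr{S}'$ in $\mcc$ the corresponding square commutes. Since by definition both sides act on $(\alpha_1, \ldots, \alpha_m)$ by tensoring the individual linear maps $V_j(\alpha_j)$ (and $V_i(\alpha_i) \oplus U(\alpha_i)$ in the $i$-th slot on the left), this commutativity follows immediately from the naturality of the $k$-linear distributivity isomorphism. Hence $\varphi = \{\varphi_{\arr{S}}\}$ is an isomorphism of $\mcc$-modules, as desired.

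There is essentially no obstacle here; the only thing one needs to be careful about is fixing a convention for the canonical distributivity isomorphism in $\kMod$ and recording that it is natural in each argument. Since the external tensor product is by construction pointwise, all the verification takes place inside $\kMod$, where the result is standard.
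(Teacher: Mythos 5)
Your proposal is correct and is essentially the paper's own argument: the paper simply cites the fact that tensor product commutes with direct sums, and you have spelled out the pointwise verification and the naturality check that this one-liner implicitly relies on. No further comment is needed.
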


\begin{proof}
This follows immediately from the fact that tensor product commutes with direct sums.
\end{proof}

\subsection{Restriction to finite subcategories}

In this subsection we consider the truncations of $\mcc$-modules. Note that there is a preorder on the class of objects of $\mcc$: for any two objects $\arr{S}$ and $\arr{T}$, we define $\arr{S} \leq \arr{T}$ if and only if $\mcc(\arr{S}, \arr{T}) \neq \emptyset$, or equivalently, for $1 \leqslant i \leqslant m$, one has $|S_i| \leqslant |T_i|$. Fixing an object $\arr{t} \in \obj(\mcc)$, we define $\mcct$ to be the full subcategory of $\mcc$ with objects $\arr{S}$ such that $\arr{S} \leq \arr{t}$.

\begin{definition}
Denote by $\jmath : \mcct \to \mcc$ the inclusion functor. We define the \textit{pullback functor}
\[
\jmath^* : \mcc \Mod \to \mcc_{\leq \arr{t}} \Mod, \quad V \mapsto V \circ \jmath
\]
and the \textit{pushforward functor}
\[
\jmath_* : \mcc_{\leq \arr{t}} \Mod \to \mcc \Mod
\]
such that for $W \in \mcct \Mod$ and $\arr{S} \in \obj(\mcc)$, one has
\[
\jmath_*(W)(\arr{S}) =
\begin{cases}
0, & \arr{S} \nleq \arr{t} \\
W(\arr{S}), & \arr{S} \leq \arr{t}
\end{cases}.
\]
We call $\jmath^*(V)$ the \textit{truncated module} of $V$ with respect to the full subcategory $\mcc_{\leq \arr{t}}$.
\end{definition}

One checks that $\jmath_*$ is the right adjoint functor of $\jmath^*$, and both functors are exact. Moreover, we have:

\begin{lemma} \label{lemma_extOfJmath}
For any $V \in \mcc \Mod$ and $W \in \mcc_{\leq \arr{t}} \Mod$, one has
\[
\Ext^i_{\mcct}(\jmath^*(V), W) \cong \Ext^i_{\mcc}(V,\jmath_*(W))
\]
for all $i \geq 1$.
\end{lemma}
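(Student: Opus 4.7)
The plan is to reduce the claim to the $i=0$ case, which is just the already-available adjunction $(\jmath^*, \jmath_*)$, by using an injective resolution of $W$ and transporting it across the adjoint pair. This is a standard base-change / Grothendieck spectral sequence type argument, but because $\jmath_*$ is exact the spectral sequence collapses and one gets an isomorphism of Ext groups directly.

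More concretely, first I would choose an injective resolution $W \to I^{\bullet}$ in the category $\mcct \Mod$; such a resolution exists because $\mcct \Mod$ is a Grothendieck abelian category (it has enough injectives). Next I would apply $\jmath_*$ termwise to obtain a complex $\jmath_*(W) \to \jmath_*(I^{\bullet})$ in $\mcc \Mod$. Here two facts from the preceding discussion are essential. First, $\jmath_*$ is exact, so $\jmath_*(I^{\bullet})$ is still an acyclic resolution of $\jmath_*(W)$. Second, $\jmath_*$ is right adjoint to the exact functor $\jmath^*$, so by Lemma \ref{lemma_right_adjoint_preserves_injectives} each $\jmath_*(I^{n})$ is injective in $\mcc \Mod$. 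Therefore $\jmath_*(W) \to \jmath_*(I^{\bullet})$ is an honest injective resolution in $\mcc \Mod$ and can be used to compute $\Ext^{\ast}_{\mcc}(V, \jmath_*(W))$.

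Having these two compatible resolutions, I would compute
\[
\Ext^i_{\mcc}(V, \jmath_*(W)) = H^i\bigl(\homo{\mcc}{V}{\jmath_*(I^{\bullet})}\bigr).
\]
The adjunction isomorphism $\homo{\mcc}{V}{\jmath_*(-)} \cong \homo{\mcct}{\jmath^*(V)}{-}$ is natural in both arguments, so it induces an isomorphism of complexes
\[
\homo{\mcc}{V}{\jmath_*(I^{\bullet})} \cong \homo{\mcct}{\jmath^*(V)}{I^{\bullet}}.
\]
Taking cohomology on the right-hand side yields $\Ext^i_{\mcct}(\jmath^*(V), W)$, which gives the desired identification for every $i \geq 0$ (in particular for $i \geq 1$).

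There is no real obstacle here; the argument is entirely formal. The only point that requires the hypotheses of the setup, rather than pure generalities, is the exactness of $\jmath_*$ (needed to keep the resolution exact after pushforward) together with its right adjointness to the exact functor $\jmath^*$ (needed to preserve injectivity via Lemma \ref{lemma_right_adjoint_preserves_injectives}). Both have already been recorded immediately before the lemma, so the proof reduces to assembling these pieces.
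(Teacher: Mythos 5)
Your argument is correct, but it is the dual of the one the paper uses. The paper resolves the \emph{first} variable: since $\jmath^*$ is exact and preserves projectives (being left adjoint to the exact functor $\jmath_*$), a projective resolution $P^\bullet \to V$ in $\mcc \Mod$ is carried to a projective resolution $\jmath^*(P^\bullet) \to \jmath^*(V)$ in $\mcct \Mod$, and the adjunction isomorphism $\homo{\mcc}{P^\bullet}{\jmath_*(W)} \cong \homo{\mcct}{\jmath^*(P^\bullet)}{W}$ gives the result (Eckmann--Shapiro). You instead resolve the \emph{second} variable: you use that $\jmath_*$ is exact and preserves injectives (being right adjoint to the exact $\jmath^*$, via Lemma \ref{lemma_right_adjoint_preserves_injectives}) to transport an injective resolution of $W$, and then apply the same adjunction on the other side. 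Both routes are formally identical in strength and rely on exactly the facts recorded just before the lemma ($\jmath^*$ and $\jmath_*$ both exact, and adjoint to each other); yours has the mild extra prerequisite that $\mcct \Mod$ has enough injectives, which does hold since it is a Grothendieck category, whereas the paper's version leans on the existence of projective resolutions, which is more elementary in this functor-category setting. As a bonus, your argument (like the paper's) actually yields the isomorphism for $i = 0$ as well, where it reduces to the adjunction itself.
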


\begin{proof}
Observe that $\jmath^*$ is an exact functor and preserves projective modules. Consequently, $\jmath^*$ preserves projective resolutions of $V$, and the desired result follows from the Eckmann-Shapiro lemma; see \cite[Corollary 2.8.4]{RepAndCoho}.
\end{proof}

We say that a $\mcc$-module $V$ is \textit{bounded} if there exists some object $\arr{t} \in \obj(\mcc)$, called an \textit{upbound} of $V$, such that $V(\arr{S}) = 0$ for all objects $\arr{S} \nleq \arr{t}$. Clearly, $V$ is bounded if and only if there exists an object $\arr{t}$ such that $\jmath_*\jmath^*(V) = V$. Moreover, $V$ is essentially finite dimensional if and only if $V$ is bounded and $V(\arr{S})$ is finite dimensional for every object $\arr{S}$.

\begin{lemma} \label{lemma_bounded_injective_FIm-module_restricts_to_injective_module}
Let $E$ be a bounded $\mcc$-module with an upbound $\arr{t}$. Then $\jmath^*(E)$ is an injective $\mcct$-module if and only if $E$ is an injective $\mcc$-module.
\end{lemma}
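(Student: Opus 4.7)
The plan is to deduce both implications from the adjunction $\jmath^{*}\dashv\jmath_{*}$ together with two easy observations about the composites. First, directly from the definition of $\jmath_{*}$ one has $\jmath^{*}\jmath_{*} = \mathrm{id}_{\mcct\Mod}$, so every $\mcct$-module is of the form $\jmath^{*}(V)$ for some $V$ in $\mcc\Mod$. Second, the bounded hypothesis on $E$ with upbound $\arr{t}$ is exactly the statement $\jmath_{*}\jmath^{*}(E) = E$; this will be the only place where boundedness enters the argument.

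For the direction ``$\jmath^{*}(E)$ injective $\Rightarrow$ $E$ injective'', I would apply Lemma \ref{lemma_right_adjoint_preserves_injectives} to the exact functor $\jmath^{*}$ and its right adjoint $\jmath_{*}$, so that $\jmath_{*}$ carries injectives to injectives. This immediately gives that $\jmath_{*}(\jmath^{*}(E)) = E$ is injective in $\mcc\Mod$.

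For the converse, I would show that $\Ext^{1}_{\mcct}(W, \jmath^{*}(E)) = 0$ for an arbitrary $\mcct$-module $W$. Setting $V := \jmath_{*}(W)$ in $\mcc\Mod$ yields $\jmath^{*}(V) = W$, after which a single application of Lemma \ref{lemma_extOfJmath} gives
\[
\Ext^{1}_{\mcct}(W, \jmath^{*}(E)) = \Ext^{1}_{\mcct}(\jmath^{*}(V), \jmath^{*}(E)) \cong \Ext^{1}_{\mcc}(V, \jmath_{*}\jmath^{*}(E)) = \Ext^{1}_{\mcc}(V, E) = 0,
\]
where the penultimate equality uses boundedness of $E$ and the last uses its injectivity.

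I do not anticipate a serious obstacle: once the two composition identities above are noted, both directions collapse to a formal invocation of Lemmas \ref{lemma_right_adjoint_preserves_injectives} and \ref{lemma_extOfJmath}. The only subtlety I would want to sanity-check is that the Eckmann-Shapiro isomorphism of Lemma \ref{lemma_extOfJmath} is natural enough in $V$ for the substitution $V = \jmath_{*}(W)$ to be legitimate; since that lemma is proved by pulling a projective resolution of $V$ across the exact functor $\jmath^{*}$, naturality is clear.
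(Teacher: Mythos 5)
Your proof is correct. The direction ``$E$ injective $\Rightarrow \jmath^*(E)$ injective'' is argued exactly as in the paper: substitute $V=\jmath_*(W)$, use $\jmath^*\jmath_*=\mathrm{id}$ and Lemma \ref{lemma_extOfJmath}, and kill $\Ext^1_{\mcc}(\jmath_*(W),\jmath_*\jmath^*(E))=\Ext^1_{\mcc}(\jmath_*(W),E)$ by injectivity of $E$. For the converse, however, you take a genuinely different and in fact cleaner route. The paper fixes an arbitrary $V\in\mcc\Mod$, forms the short exact sequence $0\to V_{\nleq\arr{t}}\to V\to V_{\leq\arr{t}}\to 0$, and chases the resulting long exact sequence; this requires not only $\Hom_{\mcc}(V_{\nleq\arr{t}},E)=0$ but also the vanishing $\Ext^1_{\mcc}(V_{\nleq\arr{t}},E)=0$, an extra claim that must be justified from boundedness of $E$. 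You bypass this decomposition entirely by invoking Lemma \ref{lemma_right_adjoint_preserves_injectives} for the adjoint pair $\jmath^*\dashv\jmath_*$ (with $\jmath^*$ exact, as the paper records), so that $\jmath_*$ preserves injectives, and then using $\jmath_*\jmath^*(E)=E$. Your argument is shorter, uses only the formal properties of the adjunction already established in the paper, and isolates boundedness as the single non-formal input; the paper's argument has the mild advantage of making explicit where an unbounded test module $V$ causes trouble, but at the cost of the additional $\Ext^1$-vanishing step.
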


\begin{proof}
Let $V$ be an arbitrary $\mcct$-module. By Lemma \ref{lemma_extOfJmath}, we have
\[
\Ext_{\mcct}^1 (V,\jmath^*(E)) \cong \Ext_{\mcct}^1 (\jmath^*\jmath_*(V),\jmath^*(E)) \cong \Ext_{\mcc}^1 (\jmath_*(V),\jmath_*\jmath^*(E)) \cong \Ext_{\mcc}^1 (\jmath_*(V),E),
\]
where the third isomorphism holds because $E \cong \jmath_*\jmath^*(E)$ since it is bounded. From these isomorphisms one deduces the if direction.

Conversely, suppose that $\jmath^*(E)$ is an injective $\mcct$-module, and let $V$ be a $\mcc$-module. We have a natural short exact sequence
\[
0 \to V_{\nleq \arr{t}} \to V \to V_{\leq \arr{t}} \to 0
\]
where $V_{\nleq \arr{t}}$ is the submodule of $V$ such that $V_{\nleq \arr{t}} (\arr{S}) = V(\arr{S})$ whenever $\arr{S} \nleq \arr{t}$ and $V_{\nleq \arr{t}} (\arr{S}) = 0$ otherwise. It induces a long exact sequence
\[
\cdots \to \Hom_{\mcc}(V_{\nleq \arr{t}},E) \to \Ext^1_{\mcc}(V_{\leq \arr{t}}, E) \to \Ext^1_{\mcc}(V,E) \to \Ext^1_{\mcc}(V_{\nleq \arr{t}},E) \to \cdots.
\]
Since $E$ is bounded by $\arr{t}$, the first and forth terms are 0, and $\jmath_* \jmath^* (E) \cong E$. Thus
\[
\Ext^1_{\mcc}(V,E) \cong \Ext^1_{\mcc}(V_{\leq \arr{t}}, E) \cong \Ext^1_{\mcc}(V_{\leq \arr{t}}, \jmath_* \jmath^* (E)) \cong \Ext^1_{\mcc}(\jmath^*(V_{\leq \arr{t}}), \jmath^*(E)) = 0.
\]
Therefore, $E$ is injective.
\end{proof}

The following lemma tells us that the extension group of two finitely generated $\mcc$-modules can be computed in the category of truncated modules.

\begin{lemma} \label{lemma_extReducedToCt}
Let $V$ and $W$ be finitely generated $\mcc$-modules. Then there exists $\arr{N} \in \obj(\mcc)$, depending on $V$ and $W$, such that for all object $\arr{t} \geq \arr{N}$, one has
\[
\Ext^1_{\mcc}(V, W) \cong \Ext^1_{\mcct}(\jmath^*(V),\jmath^*(W)).
\]
\end{lemma}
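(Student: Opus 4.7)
The plan is to transfer the Ext computation to a finitely generated projective resolution of $V$ and to exploit that only the first three terms of such a resolution are needed to detect $\Ext^1$.

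First, using local Noetherianity (Lemma~\ref{lemma_locally_noetherian_of_FIm}), I would construct a projective resolution $P_2 \to P_1 \to P_0 \to V \to 0$ in which $P_0$, $P_1$, $P_2$ are all finitely generated (the kernel of any surjection from a finitely generated module onto a finitely generated module remains finitely generated). By Lemma~\ref{lemma_f.g._projective_module_is_summand_of_free_module}, each $P_i$ is a direct summand of a finite direct sum of representable modules $\bigoplus_j M(\arr{S}_{i,j})$. I would then define $\arr{N}$ to be any object of $\mcc$ that is an upper bound for the finitely many $\arr{S}_{i,j}$ arising for $i \in \{0,1,2\}$.

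Next, fix any $\arr{t} \geq \arr{N}$ and consider the natural epimorphism $W \to \jmath_*\jmath^*(W)$, which is the identity on values at objects $\arr{S} \leq \arr{t}$ and zero on the rest. By Yoneda, $\Hom_{\mcc}(M(\arr{S}), W) \cong W(\arr{S})$, so the choice of $\arr{N}$ ensures that for every $i \in \{0,1,2\}$ and every $j$ the induced map
\[
\Hom_{\mcc}(M(\arr{S}_{i,j}), W) \longrightarrow \Hom_{\mcc}(M(\arr{S}_{i,j}), \jmath_*\jmath^*(W))
\]
is an isomorphism. Passing to finite direct sums and then to direct summands, we deduce that
\[
\Hom_{\mcc}(P_i, W) \longrightarrow \Hom_{\mcc}(P_i, \jmath_*\jmath^*(W))
\]
is an isomorphism for $i = 0, 1, 2$. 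Since $\Ext^1_{\mcc}(V, -)$ is the first cohomology of the complex $\Hom_{\mcc}(P_\bullet, -)$ and this cohomology depends only on the three terms in positions $0, 1, 2$, one obtains $\Ext^1_{\mcc}(V, W) \cong \Ext^1_{\mcc}(V, \jmath_*\jmath^*(W))$.

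Finally, applying Lemma~\ref{lemma_extOfJmath} to $\jmath^*(W) \in \mcct \Mod$, together with the trivial identity $\jmath^*\jmath_* = \id$ on $\mcct \Mod$, gives
\[
\Ext^1_{\mcct}(\jmath^*(V), \jmath^*(W)) \cong \Ext^1_{\mcc}(V, \jmath_*\jmath^*(W)),
\]
and composing with the previous isomorphism produces the desired identification. The only substantive input is the existence of a finitely generated projective resolution of $V$ out to degree two; the remainder is formal manipulation via Yoneda and adjunction, so this existence (guaranteed by local Noetherianity) is the point where the argument could potentially fail and is therefore the main thing to get right.
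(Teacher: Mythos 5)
Your proposal is correct and follows essentially the same route as the paper: build a projective resolution of $V$ that is finitely generated (hence generated below some $\arr{N}$) in degrees $0,1,2$ via local Noetherianity, observe that replacing $W$ by $\jmath_*\jmath^*(W)$ does not change $\Ext^1_{\mcc}(V,-)$, and then invoke Lemma~\ref{lemma_extOfJmath}. The only cosmetic difference is that you compare the Hom-complexes $\Hom_{\mcc}(P_\bullet, W)$ and $\Hom_{\mcc}(P_\bullet, \jmath_*\jmath^*(W))$ directly, whereas the paper runs the long exact sequence attached to $0 \to W_{\nleq \arr{t}} \to W \to \jmath_*\jmath^*(W) \to 0$ and kills $\Ext^1$ and $\Ext^2$ into $W_{\nleq \arr{t}}$; these are equivalent bookkeeping for the same idea.
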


\begin{proof}
Since $\mcc$ is locally Noetherian, there exists a projective resolution
\[
\cdots \to P^{-2} \to P^{-1} \to P^{0} \to V \to 0
\]
such that each $P^{-i}$ is finitely generated. In particular, there exists $\arr{N} \in \obj(\mcc)$ such that $P^{-2}$ and $P^{-1}$ are both generated by their values on objects $\arr{S}$ with $\arr{S} < \arr{N}$, which means that $\arr{S} \leq \arr{N}$ and $\arr{S}$ is not isomorphic to $\arr{N}$. Suppose that $\arr{t}$ is an object of $\mcc$ with $\arr{t} \geq \arr{N}$, and let $U$ be the submodule $W_{\nleq \arr{t}}$ of $W$. Denote by $\jmath: \mcct \Mod \to \mcc \Mod$ the inclusion functor. We have a short exact sequence
\[
0 \to U \to W \to \jmath_*\jmath^*(W) \to 0
\]
which induces a long exact sequence
\[
\cdots \to \Ext^1_{\mcc}(V,U) \to \Ext^1_{\mcc}(V,W) \to \Ext^1_{\mcc}(V,\jmath_*\jmath^*(W)) \to \Ext^2_{\mcc}(V,U) \to \cdots
\]
But, for $i = 1$ or 2 one has $\homo{\mcc}{P^{-i}}{U} = 0$ and so $\Ext^i_{\mcc}(V,U) = 0$.
It follows that
\[
\Ext^1_{\mcc}(V,W) \cong \Ext^1_{\mcc}(V,\jmath_*\jmath^*(W)) \cong \Ext^1_{\mcct}(\jmath^*(V),\jmath^*(W))
\]
by Lemma \ref{lemma_extOfJmath}.
\end{proof}

From this lemma we deduce a criterion for the injectivity of $\mcc$-modules.

\begin{corollary} \label{corollary_a_sufficient_condition_for_injective_FIm_module}
Let $E \in \mcc \module$ and suppose that there exists an object $\arr{L}$ such that for all $\arr{t}$ satisfying $\arr{t} \geq \arr{L}$, the truncated module $\jmath^*(E)$ is injective. Then $E$ is injective in $\mcc \Mod$.
\end{corollary}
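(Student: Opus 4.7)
The plan is to reduce the injectivity of $E$ in $\mcc \Mod$ to the vanishing of $\Ext^1_{\mcc}(V,E)$ for every finitely generated $\mcc$-module $V$, and then compute that Ext group inside a suitable truncated subcategory where the hypothesis applies.

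First, by Corollary \ref{coro_injModmod}, since $E$ is finitely generated, it suffices to prove that $E$ is injective in $\mcc \module$, and for this it is enough to check $\Ext^1_{\mcc}(V,E) = 0$ for every finitely generated $\mcc$-module $V$. Here I am using that local Noetherianity (Lemma \ref{lemma_locally_noetherian_of_FIm}) guarantees that $\mcc \module$ is closed under kernels and extensions inside $\mcc \Mod$, so the two Ext groups agree for finitely generated arguments; alternatively one can verify Baer's criterion (Lemma \ref{lemma_Baer_criterion}) directly, noting that every submodule $U$ of $M(\arr{S})$ is finitely generated and so $\Ext^1_{\mcc}(M(\arr{S})/U, E) = 0$ provides the required extension.

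Fix such a finitely generated $V$. Apply Lemma \ref{lemma_extReducedToCt} to obtain an object $\arr{N} \in \obj(\mcc)$ (depending on $V$ and $E$) such that
\[
\Ext^1_{\mcc}(V,E) \cong \Ext^1_{\mcct}(\jmath^*(V), \jmath^*(E))
\]
for every $\arr{t} \geq \arr{N}$. Now choose $\arr{t}$ to be the componentwise maximum of $\arr{N}$ and the object $\arr{L}$ supplied by hypothesis, so that $\arr{t} \geq \arr{N}$ and $\arr{t} \geq \arr{L}$ simultaneously. By assumption $\jmath^*(E)$ is then injective in $\mcct \Mod$, hence the right-hand side vanishes, and therefore $\Ext^1_{\mcc}(V,E) = 0$.

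Since $V$ was an arbitrary finitely generated $\mcc$-module, this proves injectivity of $E$ in $\mcc \module$, and hence in $\mcc \Mod$ by Corollary \ref{coro_injModmod}. The only real subtlety in the argument is verifying that a common upper bound for $\arr{N}$ and $\arr{L}$ exists, which is immediate from the componentwise definition of the preorder on $\obj(\mcc)$; everything else is a direct concatenation of the lemmas already established in this subsection.
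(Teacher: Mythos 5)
Your proof is correct and follows essentially the same route as the paper: reduce to $\Ext^1_{\mcc}(V,E)=0$ for finitely generated $V$ via Corollary \ref{coro_injModmod}, apply Lemma \ref{lemma_extReducedToCt} to pass to a truncation with $\arr{t}$ a common upper bound of $\arr{N}$ and $\arr{L}$, and invoke the hypothesis. The extra remarks on why the two Ext groups agree and why a common upper bound exists are fine but not needed beyond what the cited lemmas already provide.
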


\begin{proof}
Let $V$ be an arbitrary finitely generated $\mcc$-module. Then by Lemma \ref{lemma_extReducedToCt}, one has
\[
\Ext^1_{\mcc}(V, E) = \Ext^1_{\mcct}(\jmath^*(V),\jmath^*(E))
\]
for $\arr{t} \geq \arr{N}$, where $\arr{N}$ depends on $V$ and $E$. In particular, we can choose a suitable $\arr{t}$ such that $\arr{t} \geq \arr{L}$ and $\arr{t} \geq \arr{N}$. In this case, $\jmath^*(E)$ is injective, so the extension group vanishes. Since $V$ is arbitrary, we deduce that $E$ is injective in $\mcc \module$. Now applying Corollary \ref{coro_injModmod} we complete the proof.
\end{proof}

\begin{remark} \normalfont
The converse statement of this corollary, however, is not valid. That is, if $E$ is an injective $\mcc$-module, there might not exist an object $\arr{L}$ such that $\jmath^*(E)$ is injective in $\mcc_{\leq \arr{t}} \module$ for $\arr{t} \geq \arr{L}$. For example, let $m = 1$. Then the $\mc$-module $E = M([1])$ is injective. However, for any $t > 1$, the truncated module $\jmath^* (E)$ is not an injective $\mc_{\leq t}$-module. To see this, we consider the short exact sequence $0 \to E_{>t} \to E \to \jmath^*(E) \to 0$. Let $W$ be the $\mc$-module such that $W(T) = 0$ if $|T| \neq t-1$, and $W(T) = k \mathrm{Aut}(T)$ if $|T| = t-1$. By a direct computation we obtain that $\Ext^1_{\mc_{\leq t}}(\jmath^*W, \jmath^*(E)) \cong \Ext^1_{\mc}(W, \jmath_* \jmath^*(E)) \neq 0$.
\end{remark}

\section{The coinduction functor on $\mcc \Mod$}

In this section we define the coinduction functor for $\mcc$-modules, extending some results described in \cite{CoinductionOfFI}, where the coinduction functor for $\mc$-modules is introduced.

\subsection{Coinduction functor on $\mc \Mod$} \label{section_restriction_and_coinduction_functor}

It is well known that any ring homomorphism $A \to B$ induces a triple of adjoint functors between $A \Mod$ and $B \Mod$, called \textit{induction}, \textit{restriction} and \textit{coinduction} functors respectively (see \cite[Section 2.8]{RepAndCoho}). In our special setting, the same adjoint triple can also be established even though the category algebras we consider may not be unital. In \cite{FI_moduleOverNoetherianRing}, the authors introduced the self embedding functor on category $\mc$, denoted by $\iota$, which induces a shift functor $\Sigma$ on $\mc \Mod$, a special form of the restriction functor. In \cite{CoinductionOfFI}, the right adjoint of $\Sigma$ (called coinduction functor) was introduced. For the convenience of the reader, we recall some constructions.

Firstly we define the self-embedding functor $\iota$ on $\mc$. For a finite set $S$, we denote $\widehat{S} := S \sqcup \set{*}$. For an element $x \in \widehat{S}$, we denote $S^x := S \setminus \{x\}$. Note that if $x = *$, then $(\widehat{S})^* = S$. For any morphism $\alpha \in \homo{\mc}{S}{T}$, we define $\iota(\alpha)$ to be the morphism in $\homo{\mc}{\widehat{S}}{\widehat{T}}$ acting the same as $\alpha$ on $S \subset \widehat{S}$ and fixing the element $*$.

\begin{definition}[restriction functor on $\mc \Mod$ \cite{FI_moduleOverNoetherianRing}]
We define the \textit{shift functor} on $\mc \Mod$ to be
\[
\Sigma : \mc \Mod \to \mc \Mod
\]
that sends every $\mc$-module $V$ to the $\mc$-module $V \circ \iota$. For a natural transformation $\pi : V \to W \in \Mor{\mc \Mod}$, we define $\Sigma\pi$ as
\[
(\Sigma\pi)_S\ce \pi_{\iota(S)} : (V \circ \iota) (S) \to (W \circ \iota) (S)
\]
where $S \in \obj(\mc)$.
\end{definition}

Let $S$ be a nonempty finite set. For an element $x \in \widehat{S}$, we denote by $\varepsilon_x$ the unique morphism $S \to \widehat{S^x}$ sending $x$, if $x \in S$, to $*$ and fixing all other elements in $S$. Note that if $x = *$, then $\varepsilon_*: S \to \widehat{S^x} = \widehat{S}$ is the canonical inclusion, and otherwise, it is an isomorphism.

The following proposition is a reformulation of \cite[Proposition 2.12]{FI_moduleOverNoetherianRing}.

\begin{proposition}\label{prop_shift_functor_on_free_module}
Let $S$ be a finite set and $x$ an element in $\widehat{S}$.
Then $\Sigma M(S) \cong \bigoplus_{x \in \widehat{S}}\left< \varepsilon_x \right>$ as $\mc$-modules, where $\left<\varepsilon_x\right>$ is the cyclic submodule of $\Sigma M(S)$ generated by $\varepsilon_x$. Moreover, $\left< \varepsilon_x \right>$ is isomorphic to the free $\mc$-module $\cce{S^x}$.
\end{proposition}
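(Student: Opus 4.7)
The plan is to unpack the definition $\Sigma M(S)(T) = M(S)(\widehat{T}) = k\homo{\mc}{S}{\widehat{T}}$, then produce a direct sum decomposition by partitioning the basis morphisms $\alpha : S \to \widehat{T}$ according to the preimage of the distinguished element $* \in \widehat{T}$. Since $\alpha$ is injective, either $* \notin \im(\alpha)$, or there is a unique $x \in S$ with $\alpha(x) = *$; adopting the convention that $\varepsilon_*$ is the canonical inclusion $S \hookrightarrow \widehat{S}$ (so the case $x = *$ handles the first possibility), this partitions the basis of $\Sigma M(S)(T)$ into pieces indexed by $x \in \widehat{S}$.

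For each $x \in \widehat{S}$, I would invoke the projectivity (representability) of $M(S^x)$ to construct a module map $\phi_x : M(S^x) \to \Sigma M(S)$ sending the canonical generator $\mathrm{id}_{S^x} \in M(S^x)(S^x)$ to $\varepsilon_x \in M(S)(\widehat{S^x}) = \Sigma M(S)(S^x)$. Concretely, at $T$ this map sends $\beta \in \homo{\mc}{S^x}{T}$ to $\iota(\beta) \circ \varepsilon_x \in \homo{\mc}{S}{\widehat{T}}$, so its image is exactly the cyclic submodule $\langle \varepsilon_x \rangle$.

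Assembling the $\phi_x$ into a single map $\Phi : \bigoplus_{x \in \widehat{S}} M(S^x) \to \Sigma M(S)$, I would verify that $\Phi$ is an isomorphism by checking that on each object $T$ it is realized by the bijection $\bigsqcup_{x \in \widehat{S}} \homo{\mc}{S^x}{T} \xrightarrow{\sim} \homo{\mc}{S}{\widehat{T}}$ described above, with inverse sending $\alpha : S \to \widehat{T}$ to the pair $(x, \alpha|_{S^x})$ where $x$ is the unique preimage of $*$ (or $x = *$ if none exists). Naturality in $T$ follows from the identity $\iota(\gamma) \circ \iota(\beta) = \iota(\gamma \circ \beta)$, so $\Phi$ is an isomorphism of $\mc$-modules. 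In particular each $\phi_x$ is injective with image $\langle \varepsilon_x \rangle$, yielding both the direct sum decomposition and the isomorphism $\langle \varepsilon_x \rangle \cong M(S^x)$.

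The argument is essentially combinatorial bookkeeping, and I do not expect any serious obstacle; the only point needing care is to handle the two cases $x = *$ and $x \in S$ uniformly. The conventions $(\widehat{S})^* = S$ and $\varepsilon_*$ = canonical inclusion make this seamless, so the proof reduces to writing down the bijection and checking equivariance.
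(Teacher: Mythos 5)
Your argument is correct, and it is the standard proof of this decomposition; the paper itself omits a proof, simply citing \cite[Proposition 2.12]{FI_moduleOverNoetherianRing}, and your bijection $\alpha \mapsto (x,\alpha|_{S^x})$ with inverse $(x,\beta)\mapsto \iota(\beta)\circ\varepsilon_x$ is exactly the combinatorial content of that reference. The case analysis on whether $*$ lies in $\im(\alpha)$, the identification of $\im(\phi_x)$ with $\langle\varepsilon_x\rangle$, and the naturality check via $\iota(\gamma)\circ\iota(\beta)=\iota(\gamma\circ\beta)$ are all handled correctly, so nothing is missing.
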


The right adjoint of shift functor is defined as follows:

\begin{definition}[Coinduction on $\mc \Mod$]
We define the \textit{coinduction functor} to be the functor
\[
\coind: \mc \Mod \to \mc \Mod
\]
such that for $V \in \mc \Mod$ and $S \in \obj(\mc)$,
\[
\coind(V)(S) \ce \homo{\mc}{\Sigma \cce{S}}{V}
\]
and for a morphism $r : S \to T$, we define
\[
\coind(V)(r): \coind(V)(S) \to \coind(V)(T), \quad \alpha \mapsto (g \mapsto \alpha(gr))
\]
We denote $\coind(V)(r)(\alpha)$ as $r \cdot \alpha$ for abbreviation.
\end{definition}

\begin{proposition}[{\cite[Theorem 1.3]{CoinductionOfFI}}] \label{prop_coinduction_on_free_FI_module}
Let $S$ be a finite set. Then
\[
\coind(\cce{S}) \cong \cce{S} \oplus \cce{\widehat{S}}.
\]
\end{proposition}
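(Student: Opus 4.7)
The plan is to compute $\coind(M(S))$ pointwise by combining the adjunction $(\Sigma, \coind)$ with Proposition~\ref{prop_shift_functor_on_free_module} and Yoneda's lemma, to recognize the resulting data as the values of $M(S) \oplus M(\widehat{S})$ via an explicit combinatorial bijection, and then to verify that the resulting identification is natural in the target object.

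First, for each $T \in \obj(\mc)$, the adjunction and Yoneda give
$$\coind(M(S))(T) \cong \Hom_{\mc}\bigl(M(T), \coind(M(S))\bigr) \cong \Hom_{\mc}\bigl(\Sigma M(T), M(S)\bigr).$$
Proposition~\ref{prop_shift_functor_on_free_module} decomposes $\Sigma M(T) \cong \bigoplus_{x \in \widehat{T}} \langle \varepsilon_x \rangle \cong \bigoplus_{x \in \widehat{T}} M(T^x)$, and a second application of Yoneda produces
$$\coind(M(S))(T) \cong \bigoplus_{x \in \widehat{T}} M(S)(T^x) \cong M(S)(T) \oplus \bigoplus_{x \in T} M(S)(T^x),$$
where the isolated summand is the $x = *$ contribution (so $T^* = T$).

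Next, I would exhibit a canonical bijection $\Hom_{\mc}(\widehat{S}, T) \xrightarrow{\sim} \bigsqcup_{x \in T} \Hom_{\mc}(S, T^x)$ sending an injection $\beta:\widehat{S} \to T$ to $(\beta(*),\, \beta|_{S})$, with inverse sending $(x, \gamma)$ to the extension of $\gamma$ by $* \mapsto x$. This works because any injection $\widehat{S} \to T$ is determined by its value at $*$ and its restriction to $S$, the latter automatically avoiding $\beta(*)$. Linearizing gives $M(\widehat{S})(T) \cong \bigoplus_{x \in T} M(S)(T^x)$, and combining with the previous step yields a pointwise isomorphism
$$\coind(M(S))(T) \cong M(S)(T) \oplus M(\widehat{S})(T).$$

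The remaining and main task is to verify that these pointwise isomorphisms are natural in $T$, so that they assemble into an isomorphism of $\mc$-modules. For a morphism $r: T \to T'$, the action on $\coind(M(S))$ is defined by $\alpha \mapsto (g \mapsto \alpha(gr))$, and I would check that under the identifications above it corresponds to post-composition by $r$ on both summands $M(S)(T) \to M(S)(T')$ and $M(\widehat{S})(T) \to M(\widehat{S})(T')$. The key point is the matching $r\beta \leftrightarrow (r(\beta(*)),\, r|_{T^{\beta(*)}} \circ \beta|_S)$ under the combinatorial bijection, which parallels how $\iota(r)$ permutes the summands in the decomposition of $\Sigma M(T)$ supplied by Proposition~\ref{prop_shift_functor_on_free_module}. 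I expect this naturality check to be the main obstacle, since the $\coind$-action is defined indirectly via pre-composition inside a Hom-set while the action on $M(\widehat{S})$ is the straightforward post-composition; the first two steps are essentially routine applications of Yoneda and combinatorics.
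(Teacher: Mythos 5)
Your pointwise computation is correct: $\coind(M(S))(T)=\homo{\mc}{\Sigma M(T)}{M(S)}\cong\bigoplus_{x\in\widehat{T}}M(S)(T^x)$ via $\alpha\mapsto(\alpha(\varepsilon_x))_{x\in\widehat{T}}$, and the combinatorial bijection identifying $\bigoplus_{x\in T}M(S)(T^x)$ with $M(\widehat{S})(T)$ is fine. The gap is in the naturality step --- exactly where you anticipated trouble, but you predict the wrong outcome: the action of $r\colon T\to T'$ is \emph{not} diagonal with respect to your decomposition. Writing $\alpha$ as $(v^x)_{x\in\widehat{T}}$ with $v^x=\alpha(\varepsilon_x)$, the identity $\varepsilon_y r=\iota(r^y)\varepsilon_{r^{-1}(y)}$ from Notation~\ref{notation_alphaY} gives that the $y$-component of $r\cdot\alpha$ is $r^y\cdot v^{r^{-1}(y)}$. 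For $y=*$ this is $r\cdot v^*$, and for $y=r(x)\in\im r$ it is $r^y\cdot v^x$; these are the two post-compositions you describe. But for $y\in T'\setminus\im r$ one has $r^{-1}(y)=*$, so the $y$-component is $r^y\cdot v^*$: the $*$-component (your ``$M(S)$ summand'') leaks into the $y$-components of the ``$M(\widehat{S})$ summand'' whenever $r$ is not surjective. This is precisely the content of Lemma~\ref{lemma_moduleStructureOfQV}, where $\alpha\cdot\overline{v^*}$ is a sum over all of $\widehat{T}\setminus\im\alpha$, not the single term at $*$.

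What your identification actually exhibits is a filtration, not a direct sum: the subspace of tuples with $v^*=0$ is a genuine submodule isomorphic to $M(\widehat{S})$ (your matching $r\beta\leftrightarrow(r(\beta(*)),\,r|_{T^{\beta(*)}}\circ\beta|_S)$ verifies this part), and the quotient by it is isomorphic to $M(S)$. To close the gap you need one further idea: either split the resulting short exact sequence $0\to M(\widehat{S})\to\coind(M(S))\to M(S)\to 0$ by projectivity of $M(S)$, or replace the naive section $\gamma\mapsto\overline{\gamma}$ (supported only at $*$) by the corrected section sending $\gamma\in\mc(S,T)$ to $\sum_{x\in\widehat{T}\setminus\im\gamma}\overline{\gamma_x}$, where $\gamma_x$ is $\gamma$ with codomain corestricted to $T^x$; one checks directly that this is a map of $\mc$-modules and a section of the quotient. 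For context, the paper offers no proof to compare against here: the proposition is quoted from \cite{CoinductionOfFI}.
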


Using the coinduction functor, Gan and Li give a new proof for the following classification of finitely generated injective $\mc$-modules, firstly obtained by Sam and Snowden in \cite{GL_equivariantModules}.

\begin{proposition}[{\cite[Theorem 1.7(i)]{CoinductionOfFI}}] \label{prop_classfication_of_f.g._injective_FI_module}
Any finitely generated injective $\mc$-module is a direct sum of an essentially finite dimensional injective $\mc$-module and a finitely generated projective $\mc$-module.
\end{proposition}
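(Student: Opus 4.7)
My plan is to split $V$ into its torsion and torsion-free parts, show each has the claimed form, and handle the torsion-free injective case via the unit of the adjunction $(\Sigma, \coind)$ together with induction on generation degree.

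First I would split off the torsion. For $V$ finitely generated injective, consider $0 \to V_T \to V \to V/V_T \to 0$. By Lemma \ref{lemma_locally_noetherian_of_FIm}, $V_T$ is finitely generated, and since its finitely many generators are each annihilated by some morphism of $\mc$, a standard argument shows $V_T$ is bounded and hence essentially finite dimensional. I would then verify that such a $V_T$ is itself injective (via Baer's criterion on submodules of $\cce{S}$, exploiting that morphisms into a bounded module are determined by their values on finitely many objects), so the sequence splits: $V \cong V_T \oplus V'$ with $V'$ finitely generated, torsion-free, and injective. It then suffices to prove that $V'$ is projective.

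For this, I would study the unit $\eta_{V'} \colon V' \to \coind(\Sigma V')$. For $v \in V'(S)$, $\eta_{V'}(v)$ is the natural transformation $\Sigma \cce{S} \to \Sigma V'$ sending $\alpha \in \mc(S, \widehat{T})$ to $V'(\alpha)(v)$. If $\eta_{V'}(v) = 0$, then in particular $V'(\varepsilon_{*})(v) = 0$, forcing $v$ to be torsion and hence zero; so $\eta_{V'}$ is a monomorphism, and by injectivity of $V'$ it is split, realizing $V'$ as a direct summand of $\coind(\Sigma V')$. I would then induct on the generation degree of $V'$: the base case $d=0$ is handled directly, since a torsion-free module generated in degree zero is a submodule of $\cce{\emptyset}^{\oplus n}$ and, being injective, splits off as a direct summand, hence is projective. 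For the inductive step, $\Sigma V'$ is again finitely generated, torsion-free, and injective (the last because the left adjoint of $\Sigma$, the induction functor associated to $\iota$, is exact), so the hypothesis presents $\Sigma V'$ as a direct summand of some free module $\bigoplus_i \cce{T_i}$ (Lemma \ref{lemma_f.g._projective_module_is_summand_of_free_module}); applying $\coind$ and invoking Proposition \ref{prop_coinduction_on_free_FI_module} shows $\coind(\Sigma V')$ is a direct summand of the free module $\bigoplus_i (\cce{T_i} \oplus \cce{\widehat{T_i}})$, whence $V'$ is projective.

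The main obstacle I anticipate is the technical claim that $\Sigma$ strictly decreases the generation degree on a nonzero finitely generated torsion-free $\mc$-module. Since $\Sigma \cce{S}$ already contains $\cce{S}$ as a summand by Proposition \ref{prop_shift_functor_on_free_module}, strict decrease is not automatic on the free building blocks, and I expect the argument to require either a refined invariant (such as the polynomial degree in the Church--Ellenberg--Farb sense, or the leading term of the Hilbert polynomial) or a careful analysis of how torsion-freeness forces a presentation whose top generators get consumed by $\Sigma$. Confirming that $\Sigma$ preserves injectivity and torsion-freeness on finitely generated modules is a secondary, essentially formal issue tied to the exactness of the induction functor.
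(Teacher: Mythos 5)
The paper offers no proof of this proposition: it is quoted verbatim from \cite[Theorem 1.7(i)]{CoinductionOfFI} (the result is originally due to Sam and Snowden \cite{GL_equivariantModules}), so your attempt can only be compared with that external proof. Your skeleton --- split off the torsion part, then show a finitely generated torsion-free injective module is projective using the shift/coinduction adjunction --- is the right one, and your verification that the unit $V' \to \coind(\Sigma V')$ is monic on torsion-free modules is correct. But there are two genuine gaps. First, your justification that $V_T$ is injective is wrong as stated: being bounded (essentially finite dimensional) does not make a module injective, and Baer's criterion does not reduce to a finite check in the way you suggest --- a homomorphism $U \to V_T$ from $U \subseteq M(S)$ is determined by finitely many values, but nothing forces it to extend to $M(S)$. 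Concretely, the simple module concentrated at $[1]$ is bounded yet admits a non-split extension by the simple at $[0]$, so it is not injective. The statement you actually need (the torsion part of a finitely generated \emph{injective} module is injective) is true but is essentially part of the classification; Gan--Li obtain it by first embedding every essentially finite dimensional module into an essentially finite dimensional injective $I$ (via truncation to a finite dimensional algebra and Lemma \ref{lemma_bounded_injective_FIm-module_restricts_to_injective_module} with $m=1$), embedding $V$ into $I \oplus P$ with $P$ projective, and splitting that embedding using injectivity of $V$.

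Second, and more seriously, your induction does not close, for exactly the reason you flag yourself: $\Sigma$ does not decrease generation degree, since $M(S)$ is a direct summand of $\Sigma M(S)$ by Proposition \ref{prop_shift_functor_on_free_module}. Knowing that $\Sigma V'$ is again finitely generated, torsion-free and injective therefore gives you no strictly smaller case to invoke, and no ``refined invariant'' fixes this locally. The missing ingredient is the shift theorem of Church--Ellenberg--Farb--Nagpal \cite{FI_moduleOverNoetherianRing}: for finitely generated $V$ there exists $N$ such that $\Sigma^N V$ is $\sharp$-filtered, hence projective in characteristic $0$. That is the substantive external input in the Gan--Li proof, and once it is granted the coinduction functor is not even needed for this implication: for torsion-free $V'$ the natural map $V' \to \Sigma^N V'$ is injective, its target is projective (hence the mono splits because $V'$ is injective), and $V'$ is a direct summand of a projective module. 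As written, your argument is a reduction of the proposition to the shift theorem, not a proof of it.
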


\subsection{Coinduction functors on $\mcc \Mod$}

In this subsection we introduce shift functors and coinduction functors on $\mcc \Mod$. Note that there are $m$ distinct self-embedding functors on $\mcc$: for $1 \leq i \leq m$, we define $\iota_i$ to be the endofunctor on $\mcc$ such that for any morphism $f=(f_1,\cdots,f_m)$ in the category $\mcc$, we have $\iota_i(f) = (f_1,\cdots,\iota(f_i),\cdots,f_m)$. Similar to results in Subsection \ref{section_restriction_and_coinduction_functor}, the self-embedding functor $\iota_i$ also induces a corresponding shift functor on $\mcc \Mod$. These functors are firstly introduced in \cite{FIm_Module}. We refer the readers to that article for more details.

\begin{definition}[$i$-th shift functor on $\mcc \Mod$]
We define the $i$-th \textit{shift functor} to be the functor
\[
\Sigma_i: \mcc \Mod \to \mcc \Mod, \quad V \mapsto V\circ\iota_i
\]
where $V \in \mcc \Mod$. For $W \in \mcc \Mod$ and any natural transformation $\pi : V \to W$, we define $\Sigma_i\pi$ as
\[
(\Sigma_i\pi)_\arr{S} \ce \pi_{\iota_i(\arr{S})} : (V \circ \iota_i) (\arr{S}) \to (W \circ \iota_i) (\arr{S})
\]
where $\arr{S} \in \obj(\mcc)$.
\end{definition}

Note that each functor $\Sigma_i$ is an exact functor; see \cite{FIm_Module}. Now we construct the following functors which are natural extensions of the coinduction functor on $\mc \Mod$.

\begin{definition}[$i$-th coinduction functor on $\mcc \Mod$]
We define the $i$-th \textit{coinduction functor} to be the functor
\[
\coind_i: \mcc \Mod \to \mcc \Mod
\]
such that for $V \in \mcc \Mod$ and $\arr{S} \in \obj(\mcc)$,
\[
\coind_i(V)(\arr{S}) \ce \homo{\kmcc}{\Sigma_i \cce{\arr{S}}}{V}
\]
and for morphism $\arr{r} : \arr{S} \to \arr{T}$, we define
\[
 \coind_i(V) (\arr{r}): \coind_i(V)(\arr{S}) \to \coind_i(V)(\arr{T}), \quad \alpha \mapsto (\arr{g} \mapsto \alpha( \arr{gr}))
\]
We denote $\coind_i(V)(\arr{r})(\alpha)$ as $\arr{r} \cdot \alpha$ for abbreviation. Note that $\coind$ coincides with $\coind_i$ when $i=1$ and $m=1$.
\end{definition}

The following Lemma is an extension of \cite[Lemma 4.2]{CoinductionOfFI} and can be proved in a similar way.

\begin{lemma}[adjoint relation of $\Sigma_i$ and $\coind_i$] \label{lemma_adjoint}
The $i$-th restriction functor $\Sigma_i$ is left adjoint to the $i$-th coinduction functor $\coind_i$.
\end{lemma}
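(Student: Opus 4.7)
The plan is to construct, naturally in both $V, W \in \mcc \Mod$, a bijection
\[
\Phi_{V,W} : \Hom_{\mcc}(\Sigma_i V, W) \xrightarrow{\sim} \Hom_{\mcc}(V, \coind_i W),
\]
following the template of \cite[Lemma 4.2]{CoinductionOfFI}. Given $\alpha : \Sigma_i V \to W$ and $v \in V(\arr{S})$, the Yoneda lemma supplies a unique $\tilde{v}: M(\arr{S}) \to V$ corresponding to $v$, and I would set
\[
\Phi_{V,W}(\alpha)_{\arr{S}}(v) \ce \alpha \circ \Sigma_i(\tilde{v}) \in \Hom_{\mcc}(\Sigma_i M(\arr{S}), W) = \coind_i(W)(\arr{S}).
\]
Checking that $\Phi_{V,W}(\alpha)$ is itself a morphism of $\mcc$-modules, i.e.\ that it intertwines the $\mcc$-action on $V$ with the action on $\coind_i(W)$, amounts to a short Yoneda computation arranged precisely to match the way $\coind_i(W)(\arr{r})$ was defined.

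For $V = M(\arr{S})$ the map $\Phi_{V,W}$ is tautologically bijective: both sides agree with $\coind_i(W)(\arr{S}) = \Hom_{\mcc}(\Sigma_i M(\arr{S}), W)$, the left by Yoneda and the right by the definition of $\coind_i$, and $\Phi$ is precisely this identification. Since $\Sigma_i$ is just precomposition with the endofunctor $\iota_i$ it commutes with all colimits, so both contravariant functors $\Hom_{\mcc}(\Sigma_i -, W)$ and $\Hom_{\mcc}(-, \coind_i W)$ turn direct sums in the first slot into products, and $\Phi$ remains a bijection on every free $\mcc$-module. For a general $V$ I would pick a free presentation $P^{-1} \to P^0 \to V \to 0$, apply both left-exact contravariant functors, and invoke the Five Lemma with the natural transformation $\Phi$ to promote the isomorphism from $P^{-1}, P^0$ to $V$; naturality in $V$ and in $W$ then follows from the defining formula and functoriality of $\Sigma_i$.

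The main obstacle is purely bookkeeping: tracking how $\Sigma_i$ acts on the Yoneda parametrizations so as to consistently identify elements of $V(\arr{S})$ with morphisms out of $M(\arr{S})$, and elements of $\coind_i(W)(\arr{S})$ with morphisms out of $\Sigma_i M(\arr{S})$. Once the compatibility square is verified for a single transition morphism $\arr{r}:\arr{S}\to\arr{T}$, the remainder of the argument is entirely formal and parallels the $m=1$ case in \cite{CoinductionOfFI} essentially verbatim.
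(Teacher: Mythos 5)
Your proposal is correct, but it takes a genuinely different route from the paper. The paper's proof realizes $\Sigma_i$ as tensoring with an explicit $\kmcc$-bimodule $U = \bigoplus_{\arr{S},\arr{T}} k\mcc(\arr{S},\widehat{\arr{T}})$, invokes the tensor--hom adjunction to get $\Hom_{\mcc}(\Sigma_i V, W) \cong \Hom_{\mcc}(V, \Hom_{\mcc}(U,W))$, and then argues that the image of any map out of $V$ lands in the direct summand $\coind_i(W) = \bigoplus_{\arr{S}}\Hom_{\mcc}(\Sigma_i M(\arr{S}),W)$ of the product $\Hom_{\mcc}(U,W) \cong \prod_{\arr{S}}\Hom_{\mcc}(\Sigma_i M(\arr{S}),W)$ --- a point that requires some care with the non-unital category algebra. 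You instead define the adjunction map directly via Yoneda, observe it is tautologically bijective on representables $M(\arr{S})$, note that both $\Hom_{\mcc}(\Sigma_i-,W)$ and $\Hom_{\mcc}(-,\coind_i W)$ send arbitrary direct sums to products (using that $\Sigma_i$ is precomposition with $\iota_i$ and hence preserves colimits), and then bootstrap to general $V$ by a free presentation and the five lemma, using left-exactness of both contravariant functors (the first because $\Sigma_i$ is exact). All the steps check out: in particular the key verification that $\Phi(\alpha)$ is a module map reduces, as you say, to the identity $\widetilde{\arr{r}\cdot v} = \tilde{v}\circ\rho_{\arr{r}}$ for the right-multiplication map $\rho_{\arr{r}}: M(\arr{T}) \to M(\arr{S})$, which matches the formula $\arr{r}\cdot\alpha = (\arr{g}\mapsto\alpha(\arr{g}\arr{r}))$ defining $\coind_i(W)(\arr{r})$. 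Your argument is arguably more elementary and sidesteps the sum-versus-product subtlety entirely, at the cost of not exhibiting $\Sigma_i$ as a tensor functor, which the paper's construction gives for free. (One small remark: you describe your argument as following the template of the cited Gan--Li lemma, but their proof, like the paper's, is the bimodule/tensor--hom one; what you actually carry out is the representable-plus-presentation argument.)
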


\begin{proof}
For an object $\arr{T} = (T_1, \ldots, T_m) \in \obj(\mcc)$, we denote the object $\widehat{\arr{T}} = (T_1, \ldots, T_{i-1}, \widehat{T_i}, T_{i+1}, \ldots, T_m)$. Let
\[
U \ce \bigoplus_{\arr{S},\arr{T} \in \obj(\mcc)} k\mcc(\arr{S},\widehat{\arr{T}})
\]
be a $k$-vector space. It has $\kmcc$-bimodule structure via
\[
\arr{r} \cdot u \ce \iota_i(\arr{r})u, \qquad u \cdot \arr{r} \ce u\arr{r},
\]
for $u \in U$ and $\arr{r} \in \Mor{\mcc}$. There is a $\mcc$-module isomorphism
\[
U \otimes_{\kmcc} V \to \Sigma_i(V), \quad u \otimes v \mapsto V(u)(v)
\]
with its inverse map on $\Sigma_i(V)(\arr{N}) = V(\widehat{\arr{N}})$ for the object $\arr{N} \in \obj(\mcc)$ defined by
\[
\Sigma_i(V)(\arr{N}) \to \arr{e}_{\arr{N}} \cdot U \otimes_{\kmcc} V, \quad v \mapsto \arr{e}_{\widehat{\arr{N}}} \otimes v.
\]

On the other hand, it is easy to see that
\[
U = \bigoplus_{\arr{S} \in \obj(\mcc)} \bigoplus_{\arr{T} \in \obj(\mcc)} \kmcc(\arr{S},\widehat{\arr{T}}) = \bigoplus_{\arr{S} \in \obj(\mcc)} \Sigma_i \cce{\arr{S}}
\]
as $\mcc$-modules. Therefore, one has
\[
\homo{\mcc}{U}{W} \cong \prod_{\arr{S} \in \obj(\mcc)} \homo{\mcc}{\Sigma_i \cce{\arr{S}}}{W}.
\]
as $k$-vector space. It has left $\kmcc$-module structure via the right $\kmcc$-module structure of $U$. But since $V$ is a direct sum of $V(\arr{N})$ for $\arr{N} \in \obj(\mcc)$, the image of any $\mcc$-module homomorphism from $V$ to $\homo{\mcc}{U}{W}$ lies in
\[
\coind_i(W) = \bigoplus_{\arr{S} \in \obj(\mcc)}\homo{\mcc}{\Sigma_i \cce{\arr{S}}}{W}.
\]
It follows that
\[
\homo{\mcc}{V}{\homo{\mcc}{U}{W}} \cong \homo{\mcc}{V}{\coind_i(W)}.
\]
Then by the tensor-hom adjuncation, we have
\[
\homo{\mcc}{\Sigma_i(V)}{W} \cong \homo{\mcc}{U \otimes V}{W} \cong \homo{\mcc}{V}{\homo{\mcc}{U}{W}} \cong \homo{\mcc}{V}{\coind_i(W)}.
\]
\end{proof}

\begin{lemma} \label{lemma_extOfSandQ}
$\Ext^j_{\mcc}(V, \coind_i(W)) \cong \Ext^j_{\mcc}(\Sigma_i(V),W)$ for $V, W \in \mcc \Mod$ and $i, j \geq 1$.
\end{lemma}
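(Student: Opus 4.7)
The statement is the higher-$\Ext$ upgrade of the adjunction in Lemma \ref{lemma_adjoint}, and the natural tool is the Eckmann–Shapiro lemma (already invoked in the proof of Lemma \ref{lemma_extOfJmath}). Concretely, whenever $L \dashv R$ is an adjoint pair with $L$ exact and $L$ sending projectives to projectives, one has $\Ext^j(LA, B) \cong \Ext^j(A, RB)$. My plan is therefore to verify these two hypotheses for $L = \Sigma_i$, $R = \coind_i$, and then quote Eckmann–Shapiro.

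The first hypothesis, exactness of $\Sigma_i$, is already recorded in the paragraph preceding the definition of $\coind_i$ (and is cited to \cite{FIm_Module}), so no further work is needed there. The second hypothesis — that $\Sigma_i$ preserves projectives — is where the real content lies. Since every projective $\mcc$-module is a direct summand of a direct sum of free modules $M(\arr{S})$, and since $\Sigma_i$ commutes with direct sums and direct summands (being the composition with a functor of $\mcc$), it suffices to prove that $\Sigma_i M(\arr{S})$ is a free $\mcc$-module for every $\arr{S} \in \obj(\mcc)$. This should be the $\FI^m$-analog of Proposition \ref{prop_shift_functor_on_free_module}: unwinding definitions, $\Sigma_i M(\arr{S})(\arr{T})$ is the $k$-linearization of $\mcc(\arr{S}, \iota_i(\arr{T}))$, and a morphism $\arr{S} \to \iota_i(\arr{T})$ is the same data as a morphism in the $j$-th slot for $j \neq i$ together with a morphism $S_i \to \widehat{T_i}$. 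Sorting by the preimage $x \in \widehat{S_i}$ of $*$ exactly as in the one-variable case yields
\[
\Sigma_i M(\arr{S}) \;\cong\; \bigoplus_{x \in \widehat{S_i}} M(\arr{S}^{(i,x)}),
\]
where $\arr{S}^{(i,x)}$ agrees with $\arr{S}$ outside the $i$-th coordinate and has $i$-th coordinate $S_i \setminus \{x\}$ (with the convention $S_i \setminus \{*\} = S_i$). In particular $\Sigma_i M(\arr{S})$ is free, hence projective.

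With both hypotheses in hand, the proof concludes quickly: choose a projective resolution $P^{\bullet} \to V$ in $\mcc \Mod$; applying the exact, projective-preserving functor $\Sigma_i$ produces a projective resolution $\Sigma_i P^{\bullet} \to \Sigma_i V$. By the adjunction of Lemma \ref{lemma_adjoint}, the cochain complexes $\homo{\mcc}{\Sigma_i P^{\bullet}}{W}$ and $\homo{\mcc}{P^{\bullet}}{\coind_i(W)}$ are naturally isomorphic, and taking $j$-th cohomology gives the asserted isomorphism
\[
\Ext^j_{\mcc}(\Sigma_i V, W) \;\cong\; \Ext^j_{\mcc}(V, \coind_i(W)).
\]

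The main obstacle is the projective-preservation step, i.e.\ the decomposition of $\Sigma_i M(\arr{S})$ into a sum of free $\mcc$-modules. However, because $\iota_i$ modifies only the $i$-th slot and leaves the other $m-1$ slots untouched, the combinatorics reduces to the argument already carried out for $\FI$ in Proposition \ref{prop_shift_functor_on_free_module}, so no genuinely new difficulty appears — everything is a slot-wise application of the $m=1$ case combined with the Eckmann–Shapiro machine.
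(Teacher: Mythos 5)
Your proof is correct and follows the same route as the paper: the paper's proof likewise invokes exactness of $\Sigma_i$, its preservation of projectives, and the Eckmann--Shapiro lemma together with the adjunction of Lemma \ref{lemma_adjoint}. Your explicit verification that $\Sigma_i M(\arr{S})$ decomposes as a direct sum of free modules is a detail the paper leaves to the cited reference \cite{FIm_Module}, but it is the right argument.
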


\begin{proof}
Since $\Sigma_i$ is exact and preserves projective $\mcc$-modules, it also preserves a projective resolution of $V$. By Eckmann-Shapiro's Lemma and Lemma \ref{lemma_adjoint}, we have the desired result.
\end{proof}

Now we start to show that the coinduction functors commute with external tensor products.

\begin{notation} \label{notation_alphaY}
Let $\alpha: S \to T$ be a morphism in $\mc$. For convenience of notation, we denote $\alpha^{-1}: \widehat{T} \to \widehat{S}$ the map such that for $z \in \widehat{T}$, we have
\[
\alpha^{-1}(z) =
\begin{cases}
\text{the preimage of }z \text{ under } \alpha, &z \in \im \alpha \\
*, &z \in \widehat{T} \setminus \im \alpha
\end{cases}.
\]
Note that $\alpha^{-1}$ is not even a morphism in $\mc$. For an element $y \in \widehat{T}$, we set $x = \alpha^{-1}(y)$. We define $\alpha^y: S^x \to T^y$ to be the morphism in $\mc$ such that  $\alpha^y(a) = \alpha(a)$ for all $a \in S^x$. One checks that $\alpha^y \in \homo{\mc}{S^x}{T^y}$ is the unique morphism with $\varepsilon_y\alpha = \iota(\alpha^y) \varepsilon_x$.
\end{notation}

Recall that Proposition \ref{prop_shift_functor_on_free_module} tells us that the $\mc$-module $\Sigma M(S)$ can be decomposed as the direct sum of cyclic submodules $\langle \varepsilon_x \rangle$. Therefore, analysis of any $\mc$-module homomorphism $\Sigma M(S) \to V$ boils down to analysis of homomorphism $\langle \varepsilon_x \rangle \to V$ which is totally determined by its evaluation on $\varepsilon_x$. This leads to the following notation.

\begin{notation} \label{notation_tilde_vx}
Let $S$ be a finite set, $x$ an element in $\widehat{S}$, $V$ a $\mc$-module, and $v^x$ an element in $V(S^x)$. We define $\overline{v^x}: \Sigma M(S) \to V$ to be the unique $\mc$-module homomorphism in $\coind(V)(S)$ such that
\[
\overline{v^x}(\varepsilon_y) = \delta_{xy}v^x
\]
for all $y \in \widehat{S}$ where $\delta$ is the Kronecker delta.
\end{notation}

\begin{lemma} \label{lemma_moduleStructureOfQV}
Notation as before. Let $V$ be a $\mc$-module, $S$ and $T$ objects in $\mc$, $\alpha: S \to T$ a morphism in $\mc$, $x$ an element in $\widehat{S}$, $y$ an element in $\widehat{T}$, and $v^x$ an element in $V(S^x)$. Then
\[
\overline{v^x} \in \coind(V)(S) = \homo{\mc}{\Sigma M(S)}{V}.
\]
Furthermore, for $x \in \widehat{S}$,
\[
\alpha \cdot \overline{v^x}=
\begin{cases}
\overline{\alpha^{\alpha(x)} \cdot v^x}, &x \in S \\
\sum_{y \in \widehat{T} \setminus \mathrm{im} \alpha} \overline{\alpha^y \cdot v^*}, &x = *
\end{cases}.
\]

\begin{proof}
We show the first case. For $x \in S$ and $y \in \widehat{T}$, we have
\[
(\alpha \cdot \overline{v^x})(\varepsilon_y) = \overline{v^x}(\varepsilon_y\alpha) = \overline{v^x}(\alpha^y \cdot \varepsilon_{\alpha^{-1}(y)}) = \alpha^y\cdot(\overline{v^x}(\varepsilon_{\alpha^{-1}(y)})) = \alpha^y \cdot (\delta_{\alpha(x),y} \, v^x) = \delta_{\alpha(x),y} (\alpha^y \cdot v^x),
\]
where the second identity follows from the property of $\alpha^y$ (See Notation \ref{notation_alphaY}), and the forth identity follows from definition of $\overline{v^x}$ (See Notation \ref{notation_tilde_vx}). In conclusion, $\alpha \cdot \overline{v^x} = \overline{\alpha^{\alpha(x)} \cdot v^x}$.

We then show the second case. For $x = *$ and $y \in \im \alpha$, we have $\alpha^{-1}(y) \neq *$, i.e. $\delta_{*,\alpha^{-1}(y)} = 0$. Then
\[
(\alpha \cdot \overline{v^*})(\varepsilon_y) = \overline{v
^*}(\varepsilon_y\alpha) = \overline{v^*}(\alpha^y \cdot \varepsilon_{\alpha^{-1}(y)}) = \alpha^y\cdot(\overline{v^*}(\varepsilon_{\alpha^{-1}(y)})) = 0.
\]
For $x = *$ and $y \in \widehat{T} \setminus \im \alpha$, we have $\alpha^{-1}(y) = *$. By a similar equation, we have $(\alpha \cdot \overline{v^*})(\varepsilon_y) = \alpha^y \cdot v^*$.
In conclusion, $\alpha \cdot \overline{v^*} = \sum_{y \in \widehat{T} \setminus \im \alpha} \overline{\alpha^y \cdot v^*}$.
\end{proof}
\end{lemma}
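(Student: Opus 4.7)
The plan is as follows. To verify $\overline{v^x} \in \coind(V)(S)$, I would invoke Proposition \ref{prop_shift_functor_on_free_module} to decompose $\Sigma \cce{S} = \bigoplus_{z \in \widehat{S}} \langle \varepsilon_z \rangle$ with each summand isomorphic to the free module $\cce{S^z}$. By the universal property of free $\mc$-modules, prescribing $\overline{v^x}(\varepsilon_z) = \delta_{xz} v^x$ on each generator $\varepsilon_z$ determines a unique $\mc$-module homomorphism $\Sigma \cce{S} \to V$, so $\overline{v^x}$ is a well-defined element of $\homo{\mc}{\Sigma \cce{S}}{V} = \coind(V)(S)$.

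For the second assertion, the target $\alpha \cdot \overline{v^x} \in \coind(V)(T)$ is again determined by its values on the generators $\varepsilon_y$ for $y \in \widehat{T}$, so it suffices to compute $(\alpha \cdot \overline{v^x})(\varepsilon_y)$. By the very definition of the $\mcc$-action on $\coind$, this equals $\overline{v^x}(\varepsilon_y \alpha)$. The key input is Notation \ref{notation_alphaY}, which supplies the compositional identity $\varepsilon_y \alpha = \iota(\alpha^y) \circ \varepsilon_{\alpha^{-1}(y)}$. Reinterpreting the post-composition by $\iota(\alpha^y)$ as the $\mc$-action of $\alpha^y$ on elements of the shifted module $\Sigma \cce{S}$, this identity says $\varepsilon_y \alpha = \alpha^y \cdot \varepsilon_{\alpha^{-1}(y)}$ inside $\Sigma \cce{S}$. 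Applying $\overline{v^x}$ and using its $\mc$-linearity together with its defining property gives
\[
(\alpha \cdot \overline{v^x})(\varepsilon_y) = \alpha^y \cdot \overline{v^x}(\varepsilon_{\alpha^{-1}(y)}) = \delta_{x,\alpha^{-1}(y)} \, (\alpha^y \cdot v^x).
\]

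It then remains to unwind the Kronecker delta according to which case $x$ falls into. If $x \in S$, then by definition of $\alpha^{-1}$ one has $\alpha^{-1}(y) = x$ iff $y = \alpha(x) \in \im \alpha$; so a single term survives and the resulting homomorphism is exactly $\overline{\alpha^{\alpha(x)} \cdot v^x}$. If instead $x = *$, then $\alpha^{-1}(y) = *$ exactly when $y \in \widehat{T} \setminus \im \alpha$; thus the surviving values on the generators $\varepsilon_y$ are precisely those recorded by the homomorphism $\sum_{y \in \widehat{T} \setminus \im \alpha} \overline{\alpha^y \cdot v^*}$, matching the claim.

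There is no genuine obstacle in this argument; the lemma is essentially a bookkeeping exercise once one notices that the compositional identity from Notation \ref{notation_alphaY} is nothing but the statement that the element $\varepsilon_y \alpha \in \Sigma \cce{S}$ lives entirely in the summand $\langle \varepsilon_{\alpha^{-1}(y)} \rangle$ of the decomposition in Proposition \ref{prop_shift_functor_on_free_module}. The only delicate point is keeping the case $x = *$ versus $x \in S$ straight, since the former produces a sum indexed by $\widehat{T} \setminus \im \alpha$ while the latter collapses to a single term.
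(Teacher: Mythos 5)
Your proposal is correct and follows essentially the same route as the paper: both reduce the computation of $\alpha\cdot\overline{v^x}$ to its values on the generators $\varepsilon_y$, apply the identity $\varepsilon_y\alpha=\iota(\alpha^y)\varepsilon_{\alpha^{-1}(y)}=\alpha^y\cdot\varepsilon_{\alpha^{-1}(y)}$ from Notation \ref{notation_alphaY}, and then unwind the resulting Kronecker delta in the two cases $x\in S$ and $x=*$. Your explicit justification of the well-definedness of $\overline{v^x}$ via the decomposition in Proposition \ref{prop_shift_functor_on_free_module} is a small addition the paper leaves implicit, but it does not change the argument.
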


The main result of this section is:

\begin{theorem} \label{theorem_Coinduction_Preserves_External_Product}
Let $V_j$ be $\mc$-modules for $1 \leq j \leq m$. Then
\[
\coind_i(V_1\boxtimes\cdots\boxtimes V_m) \cong V_1 \boxtimes \cdots \boxtimes \coind(V_{i})\boxtimes \cdots \boxtimes V_m
\]
as $\mcc$-modules.
\end{theorem}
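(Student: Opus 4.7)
The plan is to evaluate both sides pointwise on an arbitrary object $\arr{S} = (S_1, \ldots, S_m) \in \obj(\mcc)$ and then verify that the resulting pointwise isomorphism is natural in $\arr{S}$. The starting observation is that $\Sigma_i M(\arr{S})$ decomposes as an external tensor product: both $\Sigma_i M(\arr{S})$ and $M(S_1) \boxtimes \cdots \boxtimes \Sigma M(S_i) \boxtimes \cdots \boxtimes M(S_m)$ send an object $\arr{T}$ to $k\mc(S_1, T_1) \otimes \cdots \otimes k\mc(S_i, \widehat{T_i}) \otimes \cdots \otimes k\mc(S_m, T_m)$, so the two modules coincide. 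Applying Proposition \ref{prop_shift_functor_on_free_module} to the $i$-th factor together with Lemma \ref{lemma_external_tensor_preserves_direct_sum} then yields
\[
\Sigma_i M(\arr{S}) \cong \bigoplus_{x \in \widehat{S_i}} M(S_1, \ldots, S_i^x, \ldots, S_m),
\]
expressing $\Sigma_i M(\arr{S})$ as a direct sum of free $\mcc$-modules.

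Applying the Yoneda lemma summand by summand now gives
\[
\coind_i(V_1 \boxtimes \cdots \boxtimes V_m)(\arr{S}) \cong \bigoplus_{x \in \widehat{S_i}} V_1(S_1) \otimes \cdots \otimes V_i(S_i^x) \otimes \cdots \otimes V_m(S_m).
\]
Pulling the direct sum into the $i$-th tensor factor and recognising that $\bigoplus_{x \in \widehat{S_i}} V_i(S_i^x) \cong \coind(V_i)(S_i)$ by the same Yoneda argument applied to $\mc$, the right-hand side becomes
\[
V_1(S_1) \otimes \cdots \otimes \coind(V_i)(S_i) \otimes \cdots \otimes V_m(S_m) = (V_1 \boxtimes \cdots \boxtimes \coind(V_i) \boxtimes \cdots \boxtimes V_m)(\arr{S}).
\]
This establishes the isomorphism on objects.

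The main obstacle is to check that this identification is natural in $\arr{S}$, i.e.\ commutes with the action of any morphism $\arr{r} = (r_1, \ldots, r_m) : \arr{S} \to \arr{T}$. Under the Yoneda correspondence, an element of $\coind_i(V_1 \boxtimes \cdots \boxtimes V_m)(\arr{S})$ is recorded by a family $(\xi^x)_{x \in \widehat{S_i}}$ of tensors $\xi^x \in V_1(S_1) \otimes \cdots \otimes V_i(S_i^x) \otimes \cdots \otimes V_m(S_m)$, and is represented by a sum of $\overline{\xi^x}$'s in the spirit of Notation \ref{notation_tilde_vx}. The action $\arr{r} \cdot \overline{\xi^x}$ on the left-hand side splits into the two cases $x \in S_i$ and $x = *$ by an argument identical to Lemma \ref{lemma_moduleStructureOfQV}. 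On the right-hand side, the action of $\arr{r}$ is componentwise: the factors $V_j$ for $j \neq i$ act through $V_j(r_j)$ in the evident way, while the $i$-th factor $\coind(V_i)$ acts through $\coind(V_i)(r_i)$, which by Lemma \ref{lemma_moduleStructureOfQV} itself has exactly the same two-case description. Matching the two formulas termwise then confirms naturality. The delicate point is the $x = *$ case, where a single $\overline{\xi^*}$ produces a sum indexed by $\widehat{T_i} \setminus \im(r_i)$, and one must keep careful track of how this indexing arises from the $\coind$-action on the $i$-th factor while the other factors remain unaffected.
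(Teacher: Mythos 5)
Your proposal is correct and follows essentially the same route as the paper: both rest on the decomposition of $\Sigma M(S)$ from Proposition \ref{prop_shift_functor_on_free_module} to identify the two sides pointwise (indexed by $x \in \widehat{S_i}$), and both reduce compatibility with morphisms to the two-case formula of Lemma \ref{lemma_moduleStructureOfQV}. Your Yoneda packaging of the pointwise step is slightly cleaner than the paper's hands-on map $\theta_{ST}$, but the naturality verification --- which you only sketch --- is exactly the explicit computation the paper writes out, including the delicate $x = *$ case producing the sum over $\widehat{T_i}\setminus \im(r_i)$.
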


\begin{proof}
We only prove the case $m = 2$ and $i = 1$, since the general case can be proved in a similar way. Let $V$ and $W$ be $\mc$-modules, and $S$ and $T$ be finite sets. Let $\varphi : \Sigma M(S) \boxtimes M(T) \to V \boxtimes W$ be a homomorphism of $\mc$-module. Since $\Sigma M(S)$ is generated by elements in $\set{\varepsilon_x \mid x\in \widehat{S}}$ and $\cce{T}$ is generated by $e_T$, the identity morphism on $T$, we have that the $\mc$-module homomorphism $\varphi$ is determined by $\varphi(\varepsilon_x \otimes e_T)$ for $x \in \widehat{S}$. Furthermore, $\varphi(\varepsilon_x \otimes e_T)$ is an element of the $k$-space $V(S^x) \otimes_k W(T)$ because
\[
\varphi(\varepsilon_x \otimes e_T) = \varphi(e_{S^x} \cdot \varepsilon_x \otimes e_T \cdot e_T) = \varphi((e_{S^x}, e_T)\cdot(\varepsilon_x \otimes e_T)) = (e_{S^x}, e_T)\cdot\varphi(\varepsilon_x \otimes e_T) \in V(S^x) \otimes_k W(T).
\]

Suppose that $\varphi(\varepsilon_x \otimes e_T)$ can be written explicitly as
\[
\varphi(\varepsilon_x \otimes e_T) = \sum_{i=1}^{r_x} v^x_i \otimes w^x_i
\]
where $v^x_i \in V(S^x)$, $w^x_i \in W(T)$ and $r_x$ is a natural number related to $x$. Without loss of generality, we assume that $\{v^x_i\mid i=1,\cdots,r_x\}$ is $k$-linearly independent in $V(S^x)$ for $x \in \widehat{S}$. We define $k$-space homomorphisms
\[
\theta_{ST}: \homo{\mc^2}{\Sigma M(S) \boxtimes M(T)}{V \boxtimes W} \to \homo{\mc}{\Sigma M(S)}{V} \otimes_k W(T)
\]
by letting
\[
\theta_{ST}(\varphi) = \sum_{x \in \widehat{S}} \sum_{i=1}^{r_x} \overline{v^x_i} \otimes w^x_i.
\]
Note that the linear independence of each $\{v^x_i\mid i=1,\cdots,r_x\}$ implies that of $\{\overline{v^x_i}\mid i=1,\cdots,r_x\}$ for a fixed $x$. Therefore, it is easy to check that $\{\overline{v^x_i}\mid x \in \widehat{S},i=1,\cdots,r_x\}$ is also linearly independent.

We are going to show that $\theta_{ST}$ is bijective. We first show that $\theta_{ST}$ is injective. Suppose that
\[
\theta_{ST}(\varphi) = \sum_{x \in \widehat{S}} \sum_{i=1}^{r_x} \overline{v^x_i} \otimes w^x_i = 0.
\]
Then, by \cite[Theorem 14.5]{AdvancedLinearAlgebra}, $w^x_i = 0$ for all $x \in \widehat{S}$ and $i=1,\cdots,r_x$. Therefore, $\varphi(\varepsilon_x \otimes e_T) = 0$ for all $x \in \widehat{S}$ and $\varphi = 0$, and hence $\theta_{ST}$ is injective. The surjectivity of $\theta_{ST}$ follows immediately from the fact that any $\mc$-module homomorphism $\psi \in \homo{\mc}{\Sigma M(S)}{V}$, by Proposition \ref{prop_shift_functor_on_free_module}, is the sum of $\overline{\psi(\varepsilon_x)}$ where $x \in \widehat{S}$.

Finally we show that $\theta_{ST}$ induces a homomorphism of $\mc^2$-modules
\[
\theta: \coind_1(V  \boxtimes W) \to \coind(V) \boxtimes W.
\]
Notation as before. Let $S', T' \in \obj(\mc)$, $\alpha: S \to S'$ and $\beta: T \to T'$, and $y \in \widehat{S'}$. We have
\begin{equation*}
\begin{split}
((\alpha,\beta)\cdot\varphi)(\varepsilon_y \otimes e_{T'}) &= \varphi(\varepsilon_y\alpha \otimes \beta) \\
&= \varphi(\alpha^y\cdot\varepsilon_{\alpha^{-1}(y)}\otimes \beta) \\
&= (\alpha^y,\beta)\cdot\varphi(\varepsilon_{\alpha^{-1}(y)}\otimes e_T)) \\
&= (\alpha^y,\beta)\cdot \sum_{i=1}^{r_{\alpha^{-1}(y)}} v^{\alpha^{-1}(y)}_i \otimes w^{\alpha^{-1}(y)}_i \\
&= \sum_{i=1}^{r_{\alpha^{-1}(y)}} \alpha^y\cdot v^{\alpha^{-1}(y)}_i \otimes \beta\cdot w^{\alpha^{-1}(y)}_i
\end{split}
\end{equation*}
Let $t_{xi} = \alpha^{\alpha(x)} \cdot v^x_i$ for $x \in S$ and $t^y = \alpha^y \cdot v^*$ for $y \in \widehat{S'}\setminus \im \alpha$. By the definition of $\theta_{S'T'}$ together with the above argument, we have
\begin{equation*}
\begin{split}
\theta_{S'T'}[(\alpha,\beta)\cdot\varphi] &= \sum_{y\in \widehat{S'}\setminus \im \alpha}\sum_{i=1}^{r_{*}} \overline{t^y} \otimes\beta\cdot w^*_i + \sum_{x \in S}\sum_{i=1}^{r_x} \overline{t_{xi}} \otimes \beta\cdot w^x_i \\
&= \sum_{i=1}^{r_{*}} \alpha\cdot\overline{v^*}\otimes\beta\cdot w^*_i + \sum_{x \in S}\sum_{i=1}^{r_x} \alpha\cdot\overline{v^x_i} \otimes \beta\cdot w^x_i \\
& = \sum_{x \in \widehat{S}}\sum_{i=1}^{r_x} \alpha\cdot\overline{v^x_i} \otimes \beta\cdot w^x_i \\
&= (\alpha,\beta)\cdot\theta_{ST}(\varphi)
\end{split}
\end{equation*}
The second identity follows from Lemma \ref{lemma_moduleStructureOfQV}. In conclusion, the induced map $\theta$ is a homomorphism of $\mc^2$-modules, hence an isomorphism.
\end{proof}

As an immediate corollary, we get:

\begin{corollary}
Let $\arr{S} = (S_1,\cdots,S_m)$ and $\arr{S'} = (S_1,\cdots,S_{i-1},\widehat{S_i},S_{i+1},\cdots,S_m)$ be objects in $\mcc$. Then
\[
\coind_i(M(\arr{S})) \cong M(\arr{S}) \oplus M(\arr{S'})
\]
\begin{proof}
This follows immediately from Theorem \ref{theorem_Coinduction_Preserves_External_Product}, Proposition \ref{prop_coinduction_on_free_FI_module} and the fact that $M(\arr{S}) \cong M(S_1)\boxtimes\ldots\boxtimes M(S_m)$.
\end{proof}
\end{corollary}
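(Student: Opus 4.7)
The plan is to chain together three results already established in the excerpt, with essentially no new ingredients. First, I would observe that the free $\mcc$-module at an object $\arr{S} = (S_1, \ldots, S_m)$ decomposes as an external tensor product of $\mc$-module representables:
\[
M(\arr{S}) \cong M(S_1) \boxtimes \cdots \boxtimes M(S_m).
\]
This follows directly from the definitions, since a morphism in $\mcc$ out of $\arr{S}$ is a tuple of $\mc$-morphisms out of the $S_j$, and $k$-linearization turns this product of hom-sets into a tensor product of hom-spaces evaluated componentwise.

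Next, I would apply Theorem \ref{theorem_Coinduction_Preserves_External_Product} to pull the $i$-th coinduction functor inside the external tensor product, obtaining
\[
\coind_i(M(\arr{S})) \cong M(S_1) \boxtimes \cdots \boxtimes \coind(M(S_i)) \boxtimes \cdots \boxtimes M(S_m).
\]
Then Proposition \ref{prop_coinduction_on_free_FI_module} evaluates the $i$-th factor: $\coind(M(S_i)) \cong M(S_i) \oplus M(\widehat{S_i})$. Substituting this in and invoking Lemma \ref{lemma_external_tensor_preserves_direct_sum} to distribute the external tensor product over the direct sum yields
\[
\coind_i(M(\arr{S})) \cong \bigl( M(S_1) \boxtimes \cdots \boxtimes M(S_i) \boxtimes \cdots \boxtimes M(S_m) \bigr) \oplus \bigl( M(S_1) \boxtimes \cdots \boxtimes M(\widehat{S_i}) \boxtimes \cdots \boxtimes M(S_m) \bigr),
\]
which, using the identification of the first step in reverse, is exactly $M(\arr{S}) \oplus M(\arr{S'})$.

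There is no serious obstacle here: all of the nontrivial content has been absorbed by Theorem \ref{theorem_Coinduction_Preserves_External_Product} and Proposition \ref{prop_coinduction_on_free_FI_module}. The only step one must be mildly careful about is the bookkeeping identification $M(\arr{S}) \cong M(S_1) \boxtimes \cdots \boxtimes M(S_m)$, and then the conclusion is immediate.
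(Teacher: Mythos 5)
Your proposal is correct and follows exactly the same route as the paper: decompose $M(\arr{S})$ as $M(S_1)\boxtimes\cdots\boxtimes M(S_m)$, apply Theorem \ref{theorem_Coinduction_Preserves_External_Product} and Proposition \ref{prop_coinduction_on_free_FI_module}, and distribute via Lemma \ref{lemma_external_tensor_preserves_direct_sum}. The only difference is that you spell out the distribution step that the paper leaves implicit.
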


\section{Locally self-injective}
In this section, we show that the external tensor product preserves injectives. As a corollary, every projective $\mcc$-module is injective. It suffices to only consider objects $\numset{n} \ce \set{1,2,\cdots,n}$ of $\mc$ for $n \in \mathbb{N}$. By convention, we set $\numset{0} = \emptyset$.

We first prove a general result.

\begin{lemma} \label{lemma_Tensor_Product_Preserves_Injective_Finitely_Dimensional_Modules}

Let $A = A_1 \otimes_k \cdots \otimes_k A_m$ be the tensor product of finite dimensional $k$-algebras $A_i$, and let $E_i$ be finitely dimensional injective left $A_i$-modules for $i=1,\cdots,m$. Let $E := E_1 \otimes_k \cdots \otimes_k E_m$ be the left $A$-modules with componentwise module action. Then $E$ is an injective left $A$-module.

\begin{proof}
We first show that $DE \cong DE_1 \otimes_k \cdots \otimes_k DE_m$ as right $A$-modules where $D := \homo{k}{-}{k}$ is the standard duality. By \cite[Theorem 14.9]{AdvancedLinearAlgebra}, there is an $k$-space isomorphism \
\[
\tau : DE_1 \otimes_k \cdots \otimes_k DE_m \to DE
\]
with
\[
\tau(f_1\otimes\cdots\otimes f_m)(e_1\otimes\cdots\otimes e_m)=f_1(e_1)\cdots f_m(e_m)
\]
where $f_i \in DE_i$ and $e_i \in E_i$. One checks that the map $\tau$ is actually a right $A$-module isomorphism.

We then show that $DE$ is injective right $A$-module. Since $E_i$ is an injective left $A_i$-module, the property \cite[Theorem I.5.13]{ElementsOfRepTheoryOfAlgebras} of the standard duality ensures that $DE_i$ is a projective right $A_i$-module. Therefore $DE_i$ is a direct summand of some free right $A_i$-module. Since tensor product preserves direct sum, it is easy to see that $DE_1 \otimes_k \cdots \otimes_k DE_m$ is a direct summand of some free right $A$-module, and hence projective. By the isomorphism $\tau$, the right $A$-module $DE$ is projective. By the property \cite[Theorem I.5.13]{ElementsOfRepTheoryOfAlgebras} of standard duality, $D^2E$ is an injective left $A$-module. By the basic property \cite[Definition I.2.9]{ElementsOfRepTheoryOfAlgebras} of standard duality, $D^2 E \cong E$. Therefore $E$ is an injective left $A$-module.
\end{proof}
\end{lemma}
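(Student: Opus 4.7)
The plan is to pass to the $k$-linear dual and reduce the injectivity question to a projectivity question, where tensor products behave well. Write $D = \homo{k}{-}{k}$ for the standard duality. The general strategy is: (i) identify $DE$ with the external tensor product $DE_1 \otimes_k \cdots \otimes_k DE_m$ as a right $A$-module, (ii) show this tensor product is projective as a right $A$-module, and (iii) dualize back to conclude that $E \cong D^2 E$ is injective as a left $A$-module.

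First I would construct the comparison map
\[
\tau : DE_1 \otimes_k \cdots \otimes_k DE_m \to DE, \qquad \tau(f_1 \otimes \cdots \otimes f_m)(e_1 \otimes \cdots \otimes e_m) = f_1(e_1) \cdots f_m(e_m).
\]
Since each $E_i$ is finite dimensional over $k$, a standard linear algebra fact gives that $\tau$ is a $k$-vector space isomorphism. The nontrivial check is that $\tau$ respects the right $A$-module structures: on the left side, $(a_1 \otimes \cdots \otimes a_m)$ acts componentwise via the right $A_i$-action on each $DE_i$ (namely $(f_i \cdot a_i)(e_i) = f_i(a_i e_i)$), while on the right side it acts by pullback along multiplication by $a_1 \otimes \cdots \otimes a_m$ on $E$. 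A direct unwinding of the definitions confirms that $\tau$ intertwines these actions.

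Next, since each $E_i$ is an injective left $A_i$-module, the standard duality property of finite dimensional algebras (cited from \cite[Theorem I.5.13]{ElementsOfRepTheoryOfAlgebras}) yields that $DE_i$ is a projective right $A_i$-module, hence a direct summand of some free right $A_i$-module $A_i^{n_i}$. Using that the external tensor product over $k$ commutes with direct sums in each variable, the $A$-module $DE_1 \otimes_k \cdots \otimes_k DE_m$ is a direct summand of $A_1^{n_1} \otimes_k \cdots \otimes_k A_m^{n_m} \cong A^{n_1 \cdots n_m}$, which is a free right $A$-module. Thus $DE_1 \otimes_k \cdots \otimes_k DE_m$, and hence $DE$ via $\tau$, is a projective right $A$-module.

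Finally, applying the standard duality once more, $D(DE)$ is an injective left $A$-module; and since $E$ is finite dimensional, $D^2 E \cong E$ canonically. Therefore $E$ is injective as a left $A$-module. The main obstacle I anticipate is purely bookkeeping: carefully verifying that $\tau$ is equivariant for the right $A$-action (as opposed to merely a $k$-linear isomorphism), since one must track how the tensor product algebra acts componentwise and ensure the contragredient/dual actions on both sides agree.
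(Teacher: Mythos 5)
Your proposal is correct and follows essentially the same route as the paper: dualize to $DE \cong DE_1 \otimes_k \cdots \otimes_k DE_m$, observe that a tensor product of projective right modules is a direct summand of a free right $A$-module, and dualize back via $D^2E \cong E$. No gaps.
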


\begin{lemma} \label{lemma_Injectivity_of_Finitely_Dimensional_injective_Modules_Tensor_M0}
Let $V_i$ be a $\mc$-module which is either $M(\numset{0})$ or an essentially finite dimensional injective $\mc$-module for $i = 1,\cdots,m$. Then $V:= V_1\boxtimes\cdots\boxtimes V_m$ is an injective $\mcc$-module.
\end{lemma}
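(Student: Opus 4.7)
The plan is to verify injectivity of $V$ by reducing to the finite dimensional setting via truncation and then invoking Lemma~\ref{lemma_Tensor_Product_Preserves_Injective_Finitely_Dimensional_Modules}. Since each $V_i$ is finitely generated, so is $V$, and Corollary~\ref{corollary_a_sufficient_condition_for_injective_FIm_module} reduces the problem to exhibiting $\arr{L}\in\obj(\mcc)$ such that the truncation $\jmath^*(V)$ is injective in $\mcct\Mod$ for every $\arr{t}\geq\arr{L}$. I would choose each coordinate $t_i$ to be at least the bound of $V_i$ whenever $V_i$ is essentially finite dimensional (and arbitrary otherwise).

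The key compatibility is that truncation respects external tensor products: writing $W_i$ for the restriction of $V_i$ to $\mc_{\leq t_i}$, one has
\[
\jmath^*(V)\;\cong\;W_1\boxtimes\cdots\boxtimes W_m,
\]
now viewed as a module over $k\mcct\cong k\mc_{\leq t_1}\otimes_k\cdots\otimes_k k\mc_{\leq t_m}$, a tensor product of finite dimensional $k$-algebras. By Lemma~\ref{lemma_Tensor_Product_Preserves_Injective_Finitely_Dimensional_Modules}, it then suffices to show each $W_i$ is finite dimensional and injective over $k\mc_{\leq t_i}$. Finite dimensionality is automatic because $\mc_{\leq t_i}$ has only finitely many objects up to isomorphism and each $V_i$ takes finite dimensional values. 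For injectivity, if $V_i$ is essentially finite dimensional injective, then its boundedness by $t_i$ together with Lemma~\ref{lemma_bounded_injective_FIm-module_restricts_to_injective_module} immediately yields injectivity of $W_i$.

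The remaining case $V_i = M(\numset{0})$ is the main obstacle: here $W_i$ is the constant $\mc_{\leq t_i}$-module of dimension $t_i+1$, with every morphism acting as the identity, and one must show it is injective in $\mc_{\leq t_i}\Mod$. My preferred approach is to identify $W_i$ with the right Kan extension of the trivial representation of $S_{t_i}$ along the inclusion of the full subcategory $\{\numset{t_i}\}\hookrightarrow\mc_{\leq t_i}$; this right Kan extension is right adjoint to the exact evaluation-at-$\numset{t_i}$ functor $\mc_{\leq t_i}\Mod\to kS_{t_i}\Mod$, so Lemma~\ref{lemma_right_adjoint_preserves_injectives}, combined with injectivity of the trivial $kS_{t_i}$-module (by semisimplicity in characteristic $0$), forces $W_i$ to be injective. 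An equivalent route would be to verify that the bounded $\mc$-module $\jmath_*(W_i)$ is essentially finite dimensional and injective in $\mc\Mod$ and then re-apply Lemma~\ref{lemma_bounded_injective_FIm-module_restricts_to_injective_module} in the single-index setting. Once both cases are in hand, Lemma~\ref{lemma_Tensor_Product_Preserves_Injective_Finitely_Dimensional_Modules} delivers injectivity of $\jmath^*(V)$, and Corollary~\ref{corollary_a_sufficient_condition_for_injective_FIm_module} upgrades this to injectivity of $V$ in $\mcc\Mod$.
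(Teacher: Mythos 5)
Your proposal is correct and follows the same overall strategy as the paper: reduce via Corollary~\ref{corollary_a_sufficient_condition_for_injective_FIm_module} to the truncations, identify $\jmath^*(V)$ with $W_1\boxtimes\cdots\boxtimes W_m$ over the tensor product of the (skeletal) truncated category algebras, and invoke Lemma~\ref{lemma_Tensor_Product_Preserves_Injective_Finitely_Dimensional_Modules} together with Lemma~\ref{lemma_bounded_injective_FIm-module_restricts_to_injective_module} for the essentially finite dimensional factors. The one place you genuinely diverge is the case $V_i=M(\numset{0})$: the paper simply cites \cite[Lemma 3.1]{CoinductionOfFI} for the injectivity of the truncated module $W_i$, whereas you prove it directly by identifying $W_i$ with the right Kan extension of the trivial $kS_{t_i}$-module along $\{\numset{t_i}\}\hookrightarrow\mc_{\leq t_i}$. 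That identification is correct --- the Kan extension formula gives $\homo{kS_{t_i}}{k\mc(S,\numset{t_i})}{k}\cong k$ at each $S$ since the $S_{t_i}$-action on injections is transitive, and the induced maps send the ``all ones'' functional to the ``all ones'' functional, recovering the constant module $W_i$ --- and since evaluation at $\numset{t_i}$ is exact and $kS_{t_i}$ is semisimple in characteristic $0$, Lemma~\ref{lemma_right_adjoint_preserves_injectives} applies. This makes your argument more self-contained than the paper's at the cost of a small verification; otherwise the two proofs coincide. (Only cosmetic caveat: $k\mc_{\leq t_i}$ itself is infinite dimensional, so the finite dimensionality you need really lives on the skeleton $\mathcal{D}_{\leq t_i}$, exactly as the paper handles via Morita equivalence.)
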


\begin{proof}
If $V_i$ is an essentially finite dimensional injective $\mc$-module, then it is bounded, and we set $n_i$ to be an upbound of $V_i$. If $V_i = M(\numset{0})$, we set $n_i = 0$. Let $\mathbf{N} = (\numset{n_1},\cdots,\numset{n_m})$, which is an object of $\mcc$. Let $\arr{t} = ([t_1],\cdots,[t_n])$ be any object of $\mcc$ satisfying $\arr{t} \geq \mathbf{N}$. It is easy to see that $k\mcc_{\leq \arr{t}}$ is Morita equivalent to $k\mathcal{D}_{\leq t_1} \otimes \ldots \otimes k\mathcal{D}_{\leq t_m}$, where $\mathcal{D}$ is the full subcategory of $\mc$ consisting of objects $[n]$, $n \in \mathbb{N}$. Furthermore, if let $W_i$ be the truncated module of $V_i$ with respect to the subcategory $\mathcal{D}_{\leq t_i}$, then the truncated module of $V$ with respect to the product category $\mathcal{D}_{\leq t_1} \times \ldots \times \mathcal{D}_{\leq t_m}$ is precisely $W_1 \boxtimes W_2 \boxtimes \ldots \boxtimes W_m$. Consequently, by Corollary \ref{corollary_a_sufficient_condition_for_injective_FIm_module}, it suffices to show that $W_1 \boxtimes \ldots \boxtimes W_m$ is an injective $k\mathcal{D}_{\leq t_1} \otimes \ldots \otimes k\mathcal{D}_{\leq t_n}$-module. By Lemma \ref{lemma_Tensor_Product_Preserves_Injective_Finitely_Dimensional_Modules}, we only need to check that each $W_i$ is an injective $k\mathcal{D}_{\leq t_i}$-module.

If $V_i = M(\numset{0})$, then by \cite[Lemma 3.1]{CoinductionOfFI} $W_i$ is injective. If $V_i$ is an essentially finite dimensional injective module, by Lemma \ref{lemma_bounded_injective_FIm-module_restricts_to_injective_module}, $W_i$ is injective as well. In either case we show that $W_i$ is injective, and the conclusion follows.
\end{proof}

Now we are ready to prove the main result of this section.

\begin{theorem} \label{theorem_injectivity_of_external_tensor_of_injective_modules}
Suppose that every $V_i$ is a finitely generated injective $\mc$-module. Then $V= V_1\boxtimes\cdots\boxtimes V_m$ is an injective $\mcc$-module.
\end{theorem}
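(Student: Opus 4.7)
The plan is to reduce the statement, via the classification of finitely generated injective $\mc$-modules and the key compatibility result Theorem \ref{theorem_Coinduction_Preserves_External_Product}, to the special case already handled by Lemma \ref{lemma_Injectivity_of_Finitely_Dimensional_injective_Modules_Tensor_M0}. The crucial observation is that coinduction turns essentially finite dimensional modules and the ``minimal'' free module $M(\numset{0})$ into modules that cover every finitely generated projective as a direct summand, while simultaneously preserving injectivity.

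First I would use Proposition \ref{prop_classfication_of_f.g._injective_FI_module} to write each $V_i \cong E_i \oplus P_i$, where $E_i$ is essentially finite dimensional injective and $P_i$ is finitely generated projective. Lemma \ref{lemma_f.g._projective_module_is_summand_of_free_module} then exhibits $P_i$ as a summand of some free module $\bigoplus_j M(\numset{n_{ij}})$, so $V_i$ is a direct summand of $E_i \oplus \bigoplus_j M(\numset{n_{ij}})$. Lemma \ref{lemma_external_tensor_preserves_direct_sum} applied in each slot shows that $V_1 \boxtimes \cdots \boxtimes V_m$ is a direct summand of a finite direct sum of modules of the form
\[
W_1 \boxtimes \cdots \boxtimes W_m,
\]
where each $W_i$ is either $E_i$ or a free module $M(\numset{n_i})$. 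Since a direct summand of an injective module is injective, it suffices to prove the injectivity of every such $W_1 \boxtimes \cdots \boxtimes W_m$.

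Next I would absorb the free factors into coinductions of $M(\numset{0})$. From Proposition \ref{prop_coinduction_on_free_FI_module}, $\coind(M(\numset{0})) \cong M(\numset{0}) \oplus M(\numset{1})$, and an immediate induction shows that $M(\numset{n})$ is a direct summand of $\coind^{n}(M(\numset{0}))$. Put $Z_i := E_i$ and $m_i := 0$ when $W_i = E_i$, and $Z_i := M(\numset{0})$ and $m_i := n_i$ when $W_i = M(\numset{n_i})$. Then each $W_i$ is a summand of $\coind^{m_i}(Z_i)$, so Lemma \ref{lemma_external_tensor_preserves_direct_sum} shows that $W_1 \boxtimes \cdots \boxtimes W_m$ is a direct summand of
\[
Y \;:=\; \coind^{m_1}(Z_1) \boxtimes \cdots \boxtimes \coind^{m_m}(Z_m).
\]
Iterating Theorem \ref{theorem_Coinduction_Preserves_External_Product} (once in each slot, in any order, since the $\coind_i$ in distinct slots act on disjoint tensor factors) identifies $Y$ with $(\coind_1^{m_1} \circ \cdots \circ \coind_m^{m_m})(Z_1 \boxtimes \cdots \boxtimes Z_m)$.

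To finish, Lemma \ref{lemma_Injectivity_of_Finitely_Dimensional_injective_Modules_Tensor_M0} guarantees that $Z_1 \boxtimes \cdots \boxtimes Z_m$ is injective in $\mcc \Mod$, since each $Z_i$ is either $M(\numset{0})$ or an essentially finite dimensional injective $\mc$-module. Lemma \ref{lemma_adjoint} makes each $\coind_i$ a right adjoint to the exact functor $\Sigma_i$, so Lemma \ref{lemma_right_adjoint_preserves_injectives} shows $\coind_i$ preserves injectives; iterating, $Y$ is injective, and hence so is its summand $W_1 \boxtimes \cdots \boxtimes W_m$, completing the proof. The substantive work has already been done: once Theorem \ref{theorem_Coinduction_Preserves_External_Product} and Lemma \ref{lemma_Injectivity_of_Finitely_Dimensional_injective_Modules_Tensor_M0} are in hand, the argument is a bookkeeping reduction, and the only mild subtlety is making sure the two coinduction identities are combined in the correct order so that ``covering $M(\numset{n_i})$ by $\coind^{n_i}(M(\numset{0}))$'' and ``moving $\coind$ in and out of $\boxtimes$'' interlock cleanly in each slot.
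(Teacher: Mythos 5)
Your proposal is correct and follows essentially the same route as the paper: reduce via Proposition \ref{prop_classfication_of_f.g._injective_FI_module} and Lemma \ref{lemma_f.g._projective_module_is_summand_of_free_module} to factors that are essentially finite dimensional injective or free, then bootstrap from Lemma \ref{lemma_Injectivity_of_Finitely_Dimensional_injective_Modules_Tensor_M0} using Theorem \ref{theorem_Coinduction_Preserves_External_Product}, Proposition \ref{prop_coinduction_on_free_FI_module}, and the fact that $\coind_i$ preserves injectives. The paper phrases the last step as a case analysis with an induction on $|S|$ in each free slot, which is exactly the unrolled form of your observation that $M(\numset{n})$ is a direct summand of $\coind^{n}(M(\numset{0}))$; your packaging is a touch more uniform but the mathematical content is identical.
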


\begin{proof}
By Proposition \ref{prop_classfication_of_f.g._injective_FI_module}, Lemma \ref{lemma_external_tensor_preserves_direct_sum} and Lemma \ref{lemma_f.g._projective_module_is_summand_of_free_module}, we can assume that $V_i$ is either an essentially finite dimensional injective $\mc$-module or a free $\mc$-module of the form $M(S)$ where $S \in \obj(\mc)$. We prove the conclusion for $m = 2$ since the general case can be established in a similar way. There are several cases:

(1) Both $V_1$ and $V_2$ are essentially finite dimensional injective $\mc$-modules. The conclusion follows from Lemma  \ref{lemma_bounded_injective_FIm-module_restricts_to_injective_module} and Lemma \ref{lemma_Injectivity_of_Finitely_Dimensional_injective_Modules_Tensor_M0}.

(2) $V_1 \cong M(S)$ and $V_2$ is essentially finite dimensional; or dually, $V_1$ is essentially finite dimensional and $V_2 \cong M(S)$.

(3) $V_1 \cong M(S)$ and $V_2 \cong M(T)$.

We prove the second case, and the third one can be shown by a similar way. The proof is based on an induction on the cardinality $|S|$. Without loss of generality we assume that $V_1 \cong M(S)$ and $V_2$ is essentially finite dimensional. If $|S| = 0$, the conclusion follows from Lemma \ref{lemma_Injectivity_of_Finitely_Dimensional_injective_Modules_Tensor_M0}. Now suppose that the conclusion holds for $|S| \leqslant n$ and consider the situation that $|S| = n+1$. By the induction hypothesis, $M(\numset{n}) \boxtimes V_2$ is injective. By Lemma \ref{lemma_right_adjoint_preserves_injectives} the functor $\coind_1$ preserves injective objects, so $\coind_1(M(\numset{n})\boxtimes V_2)$ is injective as well. By Theorem \ref{theorem_Coinduction_Preserves_External_Product} and Proposition \ref{prop_coinduction_on_free_FI_module}, $M(\numset{n+1})\boxtimes V_2$ is a direct summand of $\coind_1(M(\numset{n})\boxtimes V_2)$, and hence is  injective. Clearly, $M(S)\boxtimes V_2 \cong M([n+1]) \boxtimes V_2$ is injective. This finishes the proof.
\end{proof}

As an immediate result, we obtain the following corollary.

\begin{corollary} \label{corollary_finitely_generated_projective_FIm_module_is_injective}
Every finitely generated projective $\mcc$-module is injective.
\end{corollary}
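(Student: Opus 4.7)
The plan is to assemble the corollary from Theorem \ref{theorem_injectivity_of_external_tensor_of_injective_modules} together with the structural facts about finitely generated projective $\mcc$-modules established in Section 2. Essentially, every finitely generated projective $\mcc$-module can be cut out of a free module built from external tensor products of free $\mc$-modules, each of which is injective, so injectivity will propagate.

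First I would invoke Lemma \ref{lemma_f.g._projective_module_is_summand_of_free_module} to reduce to the case of free $\mcc$-modules: any finitely generated projective $P$ is a direct summand of $\bigoplus_{j=1}^{n} M(\arr{S}_j)$ for finitely many objects $\arr{S}_j = (S_{j,1},\ldots,S_{j,m}) \in \obj(\mcc)$. Since a direct summand of an injective module is injective, and a finite direct sum of injectives is injective, it suffices to show that each $M(\arr{S}_j)$ is an injective $\mcc$-module.

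Next I would use the fact (recorded at the end of the proof of the corollary after Theorem \ref{theorem_Coinduction_Preserves_External_Product}) that
\[
M(\arr{S}_j) \cong M(S_{j,1}) \boxtimes \cdots \boxtimes M(S_{j,m})
\]
as $\mcc$-modules. Each tensor factor $M(S_{j,i})$ is a finitely generated projective $\mc$-module, hence by Proposition \ref{prop_classfication_of_f.g._injective_FI_module} it is a finitely generated injective $\mc$-module (this is the classical Sam--Snowden self-injectivity of $\mc$, reproved in \cite{CoinductionOfFI}).

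Finally I would apply Theorem \ref{theorem_injectivity_of_external_tensor_of_injective_modules} to the family of finitely generated injective $\mc$-modules $M(S_{j,1}),\ldots,M(S_{j,m})$ to conclude that $M(\arr{S}_j)$ is an injective $\mcc$-module. Since this holds for every $j$, the direct sum $\bigoplus_j M(\arr{S}_j)$ is injective, and hence so is its summand $P$. There is no real obstacle here: all the analytic work has been done in Theorem \ref{theorem_injectivity_of_external_tensor_of_injective_modules}, and this corollary is essentially just the observation that free $\mcc$-modules factor as external tensor products of free $\mc$-modules, together with the known self-injectivity of $\mc$ in characteristic zero.
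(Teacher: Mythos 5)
Your proof is correct and follows essentially the same route as the paper's: reduce to free modules via Lemma \ref{lemma_f.g._projective_module_is_summand_of_free_module}, factor $M(\arr{S})$ as $M(S_1)\boxtimes\cdots\boxtimes M(S_m)$, and apply Theorem \ref{theorem_injectivity_of_external_tensor_of_injective_modules}. One small quibble: Proposition \ref{prop_classfication_of_f.g._injective_FI_module} as stated only classifies the finitely generated injective $\mc$-modules and does not itself assert that $M(S)$ is injective, so the injectivity of each factor $M(S_{j,i})$ should be attributed directly to the Sam--Snowden/Gan--Li self-injectivity of $\mc$ in characteristic zero (which you correctly identify in your parenthetical remark).
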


\begin{proof}
This follows from Lemma \ref{lemma_f.g._projective_module_is_summand_of_free_module}, Theorem \ref{theorem_injectivity_of_external_tensor_of_injective_modules} and the fact that $M(\arr{S}) \cong M(S_1)\boxtimes\ldots\boxtimes M(S_m)$ for $\arr{S} = (S_1, \ldots, S_m) \in \obj(\mcc)$.
\end{proof}

\section{Nakayama functor and Serre quotient}

In this section we apply the general results in \cite{AppOfNakayama} to establish an equivalence of categories between the Serre quotient category $\mcc \module /\mcc \module^{\mathrm{tor}}$ and the category $\mcc \fdmod$ of essentially finite dimensional $\mcc$-modules.

Let $V$ be a $\mcc$-module and $\arr{S} \in \obj(\mcc)$. An element $v$ in $V(\arr{S})$ is a \emph{torsion element} if there exists a morphism $\alpha: \arr{S} \to \arr{T}$ such that $V(\alpha)(v) = 0$. We define the \textit{torsion part} $V_T$ of $V$ to be the submodule of $V$ consisting of all torsion elements and the \textit{torsion free part} $V_F$ of $V$ to be the quotient module $V/V_T$. We say that $V$ is a \textit{torsion module} (resp., a \textit{torsion free module}) if and only if its torsion free part (resp., torsion part) is zero. We denote by $\mcc \module^{\mathrm{tor}}$ the category of finitely generated torsion $\mcc$-modules, which is an abelian subcategory of $\mcc \module$.

It is easy to see that $\mcc$ is an EI-category (here EI means that every endomorphism is an isomorphism). It is \textit{essentially inwards finite} (that is, for each $\arr{T} \in \obj(\mcc)$, there are only finitely many $\arr{S} \in \obj(\mcc)$ up to isomorphism such that $\mcc (\arr{S}, \arr{T})$ is nonempty), and \textit{hom-finite} (that is, $\homo{\mcc}{\arr{S}}{\arr{T}}$ is a finite set for all objects $\arr{S}$ and $\arr{T}$). Moreover, the category $\mcc$ is \textit{locally Noetherian} by Lemma \ref{lemma_locally_noetherian_of_FIm} and \textit{locally self-injective} by Corollary \ref{corollary_finitely_generated_projective_FIm_module_is_injective}.

By a general result in \cite[Section 2]{AppOfNakayama}, there is a \textit{standard duality functor} $D \ce \homo{k}{-}{k}$:
\[
\mcc \fdmod \xrightarrow{D} ({\mcc})^{\mathrm{op}} \fdmod.
\]
We also have a pair of contravariant functors:
\[
\homo{\mcc}{-}{\kmcc}: \mcc \module \to ({\mcc})^{\mathrm{op}} \fdmod
\]
and
\[
\homo{({\mcc})^{\mathrm{op}}}{-}{\kmcc}: ({\mcc})^{\mathrm{op}} \fdmod \to \mcc \module.
\]

\begin{definition}
The \textit{Nakayama functor} $\nu$ is defined to be the composition
\[
D \circ \homo{\mcc}{-}{\kmcc}: \mcc \module \to \mcc \fdmod.
\]
The \textit{inverse Nakayama functor} $\nu^{-1}$ is defined to be the composition
\[
\homo{({\mcc})^{\mathrm{op}}}{-}{\kmcc} \circ D : \mcc \fdmod \to \mcc \module.
\]
\end{definition}

For the convenience of the reader, we recall the definition of Serre quotient categories. Let $\mathcal{A}$ be an abelian category. A \textit{Serre subcategory} of $\mathcal{A}$ is a full subcategory $\mathcal{B}$ of $\mathcal{A}$ which is closed under subobjects, quotient objects and extensions. The \textit{Serre quotient} $\mathcal{A}/\mathcal{B}$ is the category whose object class equals that of $\mathcal{A}$ and for any two objects $X$ and $Y$, the hom-set
\[
\homo{\mathcal{A}/\mathcal{B}}{X}{Y} := \underrightarrow{\mathrm{lim}}\homo{\mathcal{A}}{X'}{Y/Y'}
\]
is the colimit over subobjects $X' \subseteq X$ and $Y' \subseteq Y$ such that $X/X' \in \mathcal{B}$ and $Y' \in \mathcal{B}$.

Now we define $\ker(\nu)$ to be the full subcategory of $\mcc \module$ consisting of objects $V$ such that $\nu(V) = 0$. It is easy to check that this is a Serre subcategory of $\mcc \module$. The following proposition states that $\ker(\nu)$ is exactly the category of all finitely generated torsion $\mcc$-modules.

\begin{proposition} \label{proposition_equality_of_Nakayama_kernel_and_torsion_modules}
Notation as above. One has $\ker(\nu) = \mcc \module^{\mathrm{tor}}$.
\end{proposition}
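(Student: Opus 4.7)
The proof plan is to establish the two inclusions $\mcc \module^{\mathrm{tor}} \subseteq \ker(\nu)$ and $\ker(\nu) \subseteq \mcc \module^{\mathrm{tor}}$ separately; the first is routine, while the second carries essentially all of the work.

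For the first inclusion, let $V$ be a finitely generated torsion module. Since $D$ is a duality, $\nu(V) = 0$ if and only if $\homo{\mcc}{V}{\kmcc} = 0$. Decomposing $\kmcc = \bigoplus_{\arr{T} \in \obj(\mcc)} M(\arr{T})$ as a left $\mcc$-module, and using that any homomorphism from a finitely generated module into a direct sum factors through a finite subsum, the plan is to reduce to showing $\homo{\mcc}{V}{M(\arr{T})} = 0$ for every $\arr{T}$. Given $f \colon V \to M(\arr{T})$ and a torsion element $v \in V(\arr{S})$ with $V(\alpha)(v) = 0$ for some $\alpha \colon \arr{S} \to \arr{R}$, one has $M(\arr{T})(\alpha)(f_{\arr{S}}(v)) = f_{\arr{R}}(V(\alpha)(v)) = 0$. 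Since $M(\arr{T})(\alpha)$ is left composition with $\alpha$ on $k\mcc(\arr{T},-)$, which is injective because the components of $\alpha$ are injections, one concludes $f_{\arr{S}}(v) = 0$; because $V$ is generated by torsion elements, $f = 0$.

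For the reverse inclusion, suppose $V \in \mcc \module$ has $\nu(V) = 0$. Applying $\homo{\mcc}{-}{\kmcc}$ to the short exact sequence $0 \to V_T \to V \to V_F \to 0$ and invoking the first inclusion, one gets $\nu(V_F) = 0$. Since $V_F$ is a finitely generated torsion-free module, the problem reduces to showing that any nonzero finitely generated torsion-free $\mcc$-module $W$ admits a nonzero homomorphism into $\kmcc$; given this, $V_F$ must vanish and hence $V = V_T$ is torsion.

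This last reduction is the main obstacle. The plan is to combine the locally self-injective property (Corollary \ref{corollary_finitely_generated_projective_FIm_module_is_injective}) with an extension of the injective-module classification (Proposition \ref{prop_classfication_of_f.g._injective_FI_module}) from $\mc$ to $\mcc$: namely, that every finitely generated injective $\mcc$-module decomposes as an essentially finite dimensional injective summand plus a finitely generated projective summand. Granted this, one takes a finitely generated essential extension $E$ of $W$ inside its injective envelope; the essentially finite dimensional summand of $E$ is bounded and therefore torsion, so it cannot intersect the torsion-free $W$ nontrivially. By essentiality the projective summand of $E$ must meet $W$, and since projectives are summands of free modules by Lemma \ref{lemma_f.g._projective_module_is_summand_of_free_module}, this yields a nonzero composition $W \hookrightarrow E \twoheadrightarrow P \hookrightarrow \bigoplus_i M(\arr{S}_i) \subseteq \kmcc$, as required. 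Establishing this extended classification, or equivalently the finite generation of the injective envelope of a finitely generated torsion-free module over $\mcc$, is the technical heart of the argument; an alternative would be to mimic the inductive approach of \cite{AppOfNakayama} for $\mc$ directly, constructing the map $W \to M(\arr{T})$ using the shift and coinduction functors $\Sigma_i, \coind_i$ developed in Section~3.
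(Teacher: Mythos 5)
Your first inclusion and the reduction of the second inclusion to the claim that every nonzero finitely generated torsion-free $\mcc$-module $W$ admits a nonzero homomorphism to $\kmcc$ are both correct and match the paper. The gap is in how you propose to prove that claim. Your main route rests on two facts that are not available: (i) a classification of finitely generated injective $\mcc$-modules as (essentially finite dimensional injective) $\oplus$ (finitely generated projective), and (ii) the finite generation of the injective envelope of a finitely generated torsion-free $\mcc$-module. The paper proves only the converse-flavored statement (projective $\Rightarrow$ injective, Corollary \ref{corollary_finitely_generated_projective_FIm_module_is_injective}); Proposition \ref{prop_classfication_of_f.g._injective_FI_module} is stated only for $\mc$, and extending it to $\mcc$ is a substantial theorem in its own right, not a formal consequence of anything in the paper. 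You correctly flag this as ``the technical heart,'' but flagging it does not fill it, so as written the argument is circularly deferring the hard step. (The essential-extension bookkeeping you sketch is fine once one grants the classification: if $E = E_1 \oplus P$ with $E_1$ bounded hence torsion, essentiality forces $E_1 = 0$, and $W$ embeds into the free module covering $P$.)

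The paper avoids injective envelopes entirely and instead argues with the shift functors, citing \cite{FIm_Module}: for a nonzero torsion-free module $V_F$ the natural map $V_F \to \Sigma_i V_F$ is injective and $\Sigma_i V_F$ is again torsion free, so iterating gives a nonzero map $V \to \Sigma_1^n \cdots \Sigma_m^n V_F$; for $n$ sufficiently large this target is relatively projective by \cite[Proposition 4.10]{FIm_Module} and hence projective in characteristic $0$, producing the required nonzero map to a finitely generated projective and thus to $\kmcc$. This is essentially the ``alternative'' you mention in your last sentence. To repair your proof you should carry out that alternative (or independently prove the classification of finitely generated injective $\mcc$-modules); as it stands the key implication $\ker(\nu) \subseteq \mcc\module^{\mathrm{tor}}$ is not established.
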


\begin{proof}
If $V$ is a finitely generated torsion $\mcc$-module, then clearly $\homo{\mcc}{V}{M(\arr{S})} = 0$ for all $\arr{S} \in \obj(\mcc)$ because $M(\arr{S})$ is torsion free. Therefore, $V \in \ker(\nu)$, and hence $\ker(\nu) \supseteq \mcc \module^{\mathrm{tor}}$.

On the other hand, if $V$ is not a torsion $\mcc$-module, then there is a short exact sequence in $\mcc \module$
\[
0 \to V_T \to V \to V_F \to 0.
\]
Note that $V_F$ is nonzero since $V$ is not a torsion module. Thus, by \cite[Lemma 2.6(3)]{FIm_Module}, the map $V_F \to \Sigma_iV_F$ is injective for $i = 1, \cdots, m$ and we obtain a nonzero map $V \to \Sigma_iV_F$ with $\Sigma_iV_F$ torsion free by \cite[Lemma 2.6(4)]{FIm_Module}. By the same argument, we will obtain an injection $\Sigma_iV_F \to \Sigma_j\Sigma_iV_F$. Composing with the map $V \to \Sigma_iV_F$, we obtain an injective map $V \to \Sigma_j\Sigma_iV_F$. Recursively, one can get an injective map $V \to \Sigma^n_1\cdots\Sigma^n_mV_F$ for a sufficiently large $n$. By \cite[Proposition 4.10]{FIm_Module}, $\Sigma^n_1\cdots\Sigma^n_mV_F$ is relative projective (See \cite[Definition 4.1]{FIm_Module}) and is projective since $k$ is field of characteristic 0 (for a reason, see \cite[Section 5.1]{FIm_Module}). Therefore, there exists a finitely generated projective $\mcc$-module $P = \Sigma^n_1\cdots\Sigma^n_mV_F$ such that $\homo{\mcc}{V}{P} \neq 0$. Thus, $V \notin \ker(\nu)$.
\end{proof}

Consequently, applying \cite[Theorem 3.6]{AppOfNakayama} we obtain the following result:

\begin{theorem}
The Nakayama functor $\nu$ induces an equivalence of categories
\[
\mcc \module/\mcc \module^{\mathrm{tor}} \xrightarrow{\sim} \mcc \fdmod.
\]
\end{theorem}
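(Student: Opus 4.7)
The plan is to reduce this theorem to an application of the general equivalence theorem \cite[Theorem 3.6]{AppOfNakayama}, which asserts that for any category $\mc'$ satisfying a list of standard hypotheses (EI, essentially inwards finite, hom-finite, locally Noetherian, and locally self-injective), the Nakayama functor induces an equivalence $\mc' \module / \ker(\nu) \xrightarrow{\sim} \mc' \fdmod$. The task therefore reduces to checking each hypothesis for $\mcc$ and combining this with the identification $\ker(\nu) = \mcc \module^{\mathrm{tor}}$ supplied by Proposition \ref{proposition_equality_of_Nakayama_kernel_and_torsion_modules}.

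First I would record the easy structural properties of $\mcc$ listed in the opening paragraph of the section: $\mcc$ is visibly an EI-category because every self-injection of a finite set is a bijection, and it is both hom-finite and essentially inwards finite since $\homo{\mcc}{\arr{S}}{\arr{T}}$ is a finite set whenever non-empty and only finitely many isomorphism classes of $\arr{S}$ admit an injection into a fixed $\arr{T}$. These are immediate from the definition of $\FIM$ and require no work.

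Next I would invoke the two nontrivial inputs. Local Noetherianity of $\mcc$ is Lemma \ref{lemma_locally_noetherian_of_FIm}, quoted from \cite{FIm_Module}. The crucial hypothesis of local self-injectivity is furnished by Corollary \ref{corollary_finitely_generated_projective_FIm_module_is_injective}, which is the principal new contribution of the paper. This is where the external-tensor-product analysis of Section 4 and, in particular, the fact that coinduction commutes with external tensor products (Theorem \ref{theorem_Coinduction_Preserves_External_Product}) do the serious work. With these inputs already in hand, no additional obstacle arises here.

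Finally, with every hypothesis of \cite[Theorem 3.6]{AppOfNakayama} verified and $\ker(\nu) = \mcc \module^{\mathrm{tor}}$ at our disposal, I would apply the cited theorem directly to conclude that $\nu$ descends to the desired equivalence $\mcc \module / \mcc \module^{\mathrm{tor}} \xrightarrow{\sim} \mcc \fdmod$. The essential difficulty has been displaced to the earlier sections: once local self-injectivity of $\mcc$ is available, the final statement is a formal consequence of the Nakayama-functor framework, so there is no genuine obstacle at this last step — the hard work lies upstream in establishing Corollary \ref{corollary_finitely_generated_projective_FIm_module_is_injective} and Proposition \ref{proposition_equality_of_Nakayama_kernel_and_torsion_modules}.
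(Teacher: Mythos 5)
Your proposal is correct and matches the paper's argument exactly: the paper verifies the same hypotheses (EI, essentially inwards finite, hom-finite, locally Noetherian via Lemma \ref{lemma_locally_noetherian_of_FIm}, locally self-injective via Corollary \ref{corollary_finitely_generated_projective_FIm_module_is_injective}) in the paragraph preceding the theorem and then deduces the equivalence from Proposition \ref{proposition_equality_of_Nakayama_kernel_and_torsion_modules} together with \cite[Theorem 3.6]{AppOfNakayama}. Nothing is missing.
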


\begin{proof}
This follows immediately from Proposition \ref{proposition_equality_of_Nakayama_kernel_and_torsion_modules} and \cite[Theorem 3.6]{AppOfNakayama}.
\end{proof}

\end{document}